\def\muntz{M\"untz }
\def\divdif{\mathord\kern.43em\vrule width.6pt height5.6pt depth-.28pt \kern-.43em\Delta}
\newtheorem{theorem}{Theorem}
\newtheorem{lemma}{Lemma}
\newtheorem{proposition}{Proposition}
\newtheorem{definition}{Definition}
\newtheorem{corollary}{Corollary}
\newdefinition{remark}{Remark}
\begin{document}

\title{A \muntz Type Theorem for a Family of Corner Cutting Schemes}

\author[rvt,els]{Rachid Ait-Haddou\corref{cor1}}
\ead{rachid@bpe.es.osaka-u.ac.jp}
\author[focal]{Yusuke Sakane}
\author[rvt,els]{Taishin Nomura}

\cortext[cor1]{Corresponding author}

\address[rvt]{The Center of Advanced Medical Engineering and Informatics,
Osaka University, 560-8531 Osaka, Japan}
\address[focal]{Department of Pure and Applied Mathematics,
Graduate School of Information Science and Technology,
Osaka University, 560-0043 Osaka, Japan}
\address[els]{Department of Mechanical Science and Bioengineering
Graduate School of Engineering Science,
Osaka University, 560-8531 Osaka, Japan}

\begin{abstract}
By identifying a family of corner cutting schemes as 
a dimension elevation process of Gelfond-B\'ezier curves,
we give a \muntz type condition for the convergence of 
the generated control polygons to the underlying curve.
The surprising emergence of the \muntz condition in the 
problem raises the question of a possible connection 
between the density questions of nested Chebyshev 
spaces and the convergence of the corresponding 
dimension elevation algorithms.  
\end{abstract}      

\begin{keyword}
Corner cutting schemes \sep B\'ezier curves \sep Gelfond-B\'ezier 
curves \sep \muntz spaces \sep density of \muntz spaces.
\end{keyword}
\maketitle
\section{Introduction}
Let $n$ be a fixed positive integer and let 
$0 < r_1 < r_2 < ... r_n < r_{n+1}...< {r_m}<...$ be an 
infinite strictly increasing sequence of positive real numbers.
Given a polygon $(P_0,P_1,...,P_n)$ in $\mathbb{R}^s, s \geq 1$,
we apply the following corner cutting scheme :
For $i=0,1,...,n$, we set $P_{i}^{0} =P_{i}$ and for
$j=1,2,...$, we construct iteratively new polygons 
$(P_0^{j},P_1^{j},....,P_{n+j}^j)$ using the inductive rule  
\begin{equation}\label{initial}
P_{0}^{j} =P_{0}^{j-1} \quad  P_{n+j}^{j} = P_{n+j-1}^{j-1}
\end{equation}
and for $i=1,...,n+j-1$
\begin{equation}\label{cornercutting} 
P_{i}^{j} = \frac{r_{i}}{r_{n+j}} P_{i-1}^{j-1} +
\left( 1-\frac{r_{i}}{r_{n+j}} \right) P_{i}^{j-1}
\end{equation}
\begin{figure}
\hskip 1.5 cm
\includegraphics[height=5.cm]{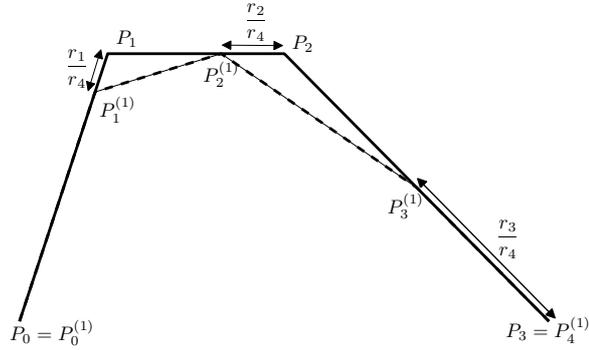}
\caption{The first iteration of the corner cutting scheme (\ref{initial})
and (\ref{cornercutting}) for the parameters $n =3, r_1 = 2, r_2 = 5, r_3 = 7$
and $r_4 = 14$.}
\label{fig:figure1}
\end{figure}
Figure \ref{fig:figure1} shows the first iteration of the corner
cutting scheme on a planar polygon $(P_0,P_1,P_2,P_3)$
for $n=3$ and positive real numbers $0 < r_1 < r_2 < r_3 < r_4$.  
In the case the real numbers $r_i$ are given by $r_i = i$ 
for every index $i$, then we recognize the degree elevation 
algorithm of B\'ezier curves, and in which it is well known 
that the control polygons of the elevated degree converges to 
the underlying B\'ezier curve.
Consider, now, the case in which $r_i = i$ for $i=1,...,n$ and 
$r_i = 2i$ for $i > n$. Figure \ref{fig:figure2} (left) shows 
the generated polygons from the scheme (\ref{initial}) and 
(\ref{cornercutting}) from four iterations, while Figure
\ref{fig:figure2} (right) shows the generated polygons
from 100 iterations. The figure suggests the convergence of
the generated polygons to the B\'ezier curve with 
control points $(P_0,P_1,...P_n)$. Consider, now, the case 
in which $r_i = i$ for $i=1,...,n$, while $r_i = i^2$ for $i >n$.
Figure \ref{fig:figure3} (left) shows the generated polygons 
from four iterations, while Figure \ref{fig:figure3} (right)
shows the obtained polygons after 100 iterations. It is clear
from the figure that the limiting polygon does not converge
to the B\'ezier curve with control points $(P_0,P_1,...,P_n)$.
As we will exhibit in this work, the main difference between 
the example of Figure \ref{fig:figure2} and the one of Figure
\ref{fig:figure3} is the fact that in the former we have 
$\sum_{i=1}^{\infty} 1/r_i  = \infty$, while in the latter 
we have $\sum_{i=1}^{\infty} 1/r_i  < \infty$.
\begin{figure}
\vskip 0.5cm {
\includegraphics[height=3.6cm]{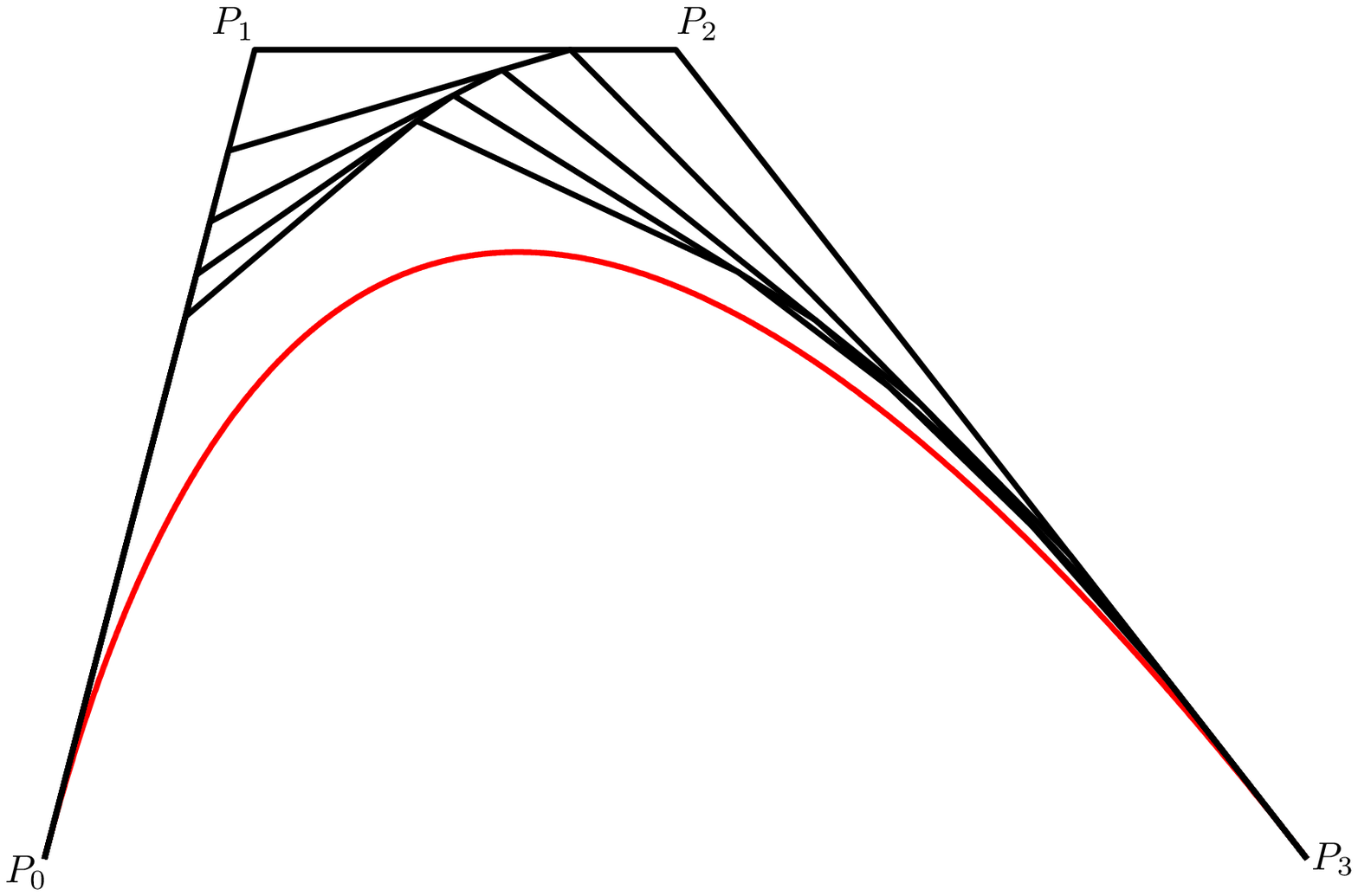}
\hskip 1.2 cm
\includegraphics[height=3.6cm]{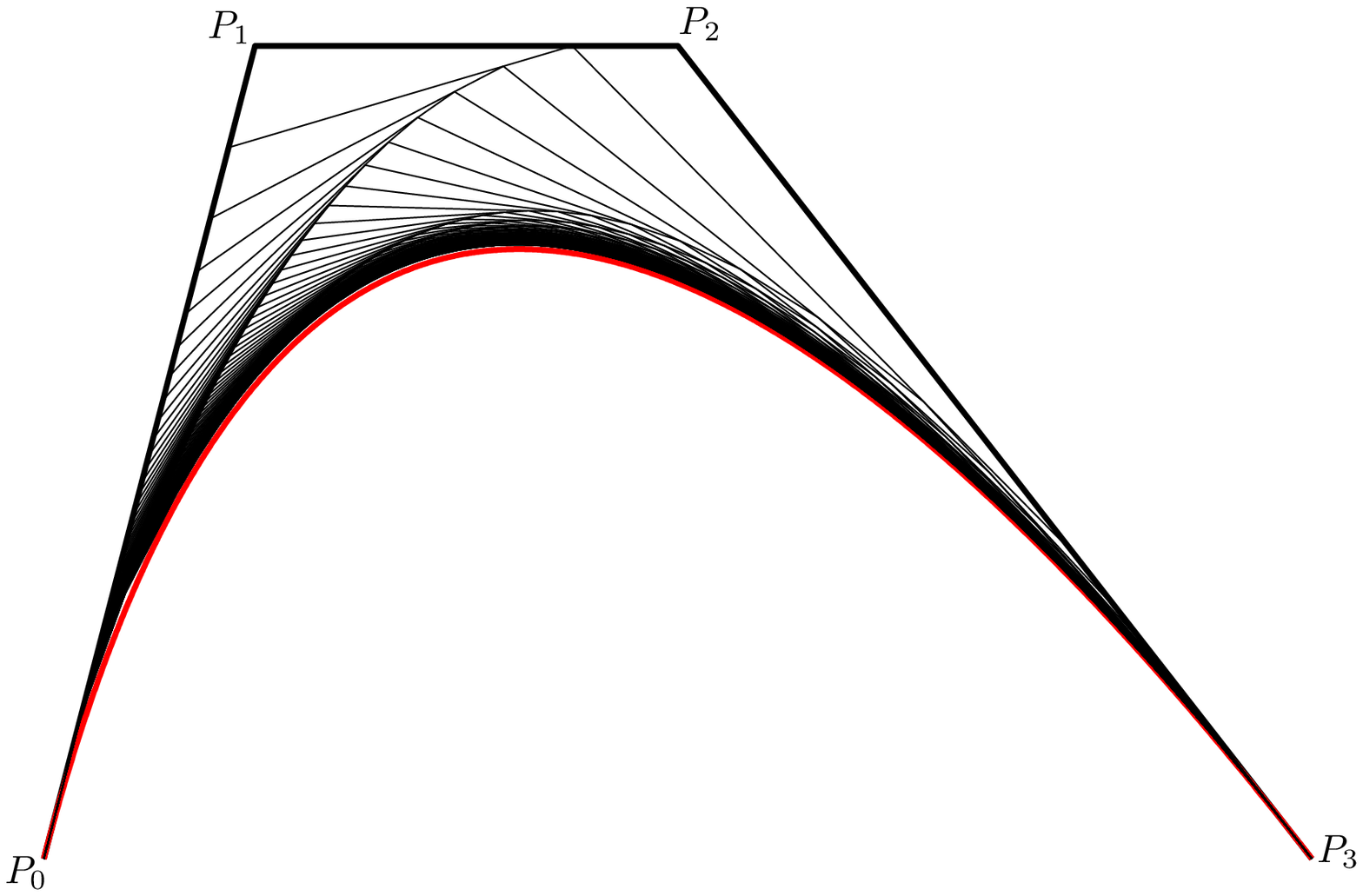}}
\caption{The sequence of polygons generated by the corner 
cutting scheme (\ref{initial}) and (\ref{cornercutting})
and parameters $n=3, r_1=1, r_2=2, r_3 =3$ and $r_j = 2j$ for $j \geq 4$.
(left, four iterations of the scheme; right, 100 iterations of the scheme). 
The red curve is the B\'ezier curve associated with the control polygon 
$(P_0,P_1,P_2,P_3)$.}
\label{fig:figure2}
\end{figure}
We will show, as a particular case of our main result, that 
if $r_i = i$ for $i=1,...,n$, and $\lim_{s\to\infty} r_s = \infty$,
then the limiting polygon generated from the corner cutting scheme
(\ref{initial}) and (\ref{cornercutting}) converges to the 
B\'ezier curve with control points $(P_0,P_1,...,P_n)$ 
if and only if the real numbers $r_i$ satisfy 
$\sum_{i=1}^{\infty} 1/r_i  = \infty$.
The emergence of the limiting polygon as a B\'ezier curve 
in the case $r_i = i$ for $i=1,...,n$ can be hinted to as follows : 
the linear space formed by the monomials with exponents the numbers
$r_i = i$ for $i=1,...,n$ and extended by a constant is given 
by $E = span(1,t,t^2,...,t^n)$; which is the linear space of polynomial
of degree $n$. The space $E$ has a special basis (the Bernstein basis)
in which the notion of B\'ezier curve can be defined. 
For general real numbers $r_i, i=1,...,n$, and imitating the previous 
construction, we obtain the \muntz space $F = span(1,t^{r_1},t^{r_2},
...,t^{r_n})$. The linear space $F$ also possess a special basis
(the Gelfond-Bernstein basis) first defined by Hirschman and Widder
\cite{hirschman} and extended by Gelfond \cite{gelfond}, which is in
a certain sense a generalization of the Bernstein basis to 
the \muntz space $F$ (in the case $r_i = i, i=1,...,n$, the 
Gelfond-Bernstein basis coincide with the Bernstein basis).
Using the Gelfond-Bernstein basis, we can canonically define the notion
of Gelfond-B\'ezier curve with control points $(P_0,P_1,...,P_n)$. 
Now, consider, for example, the limiting polygon of the corner 
cutting scheme (\ref{initial}) and (\ref{cornercutting})
for the case $n=3$ and in which $r_1=2, r_2 = 4, r_3 =5$ and 
$r_i = 2i$ for $i >3$.
\begin{figure}
\includegraphics[height=3.7cm]{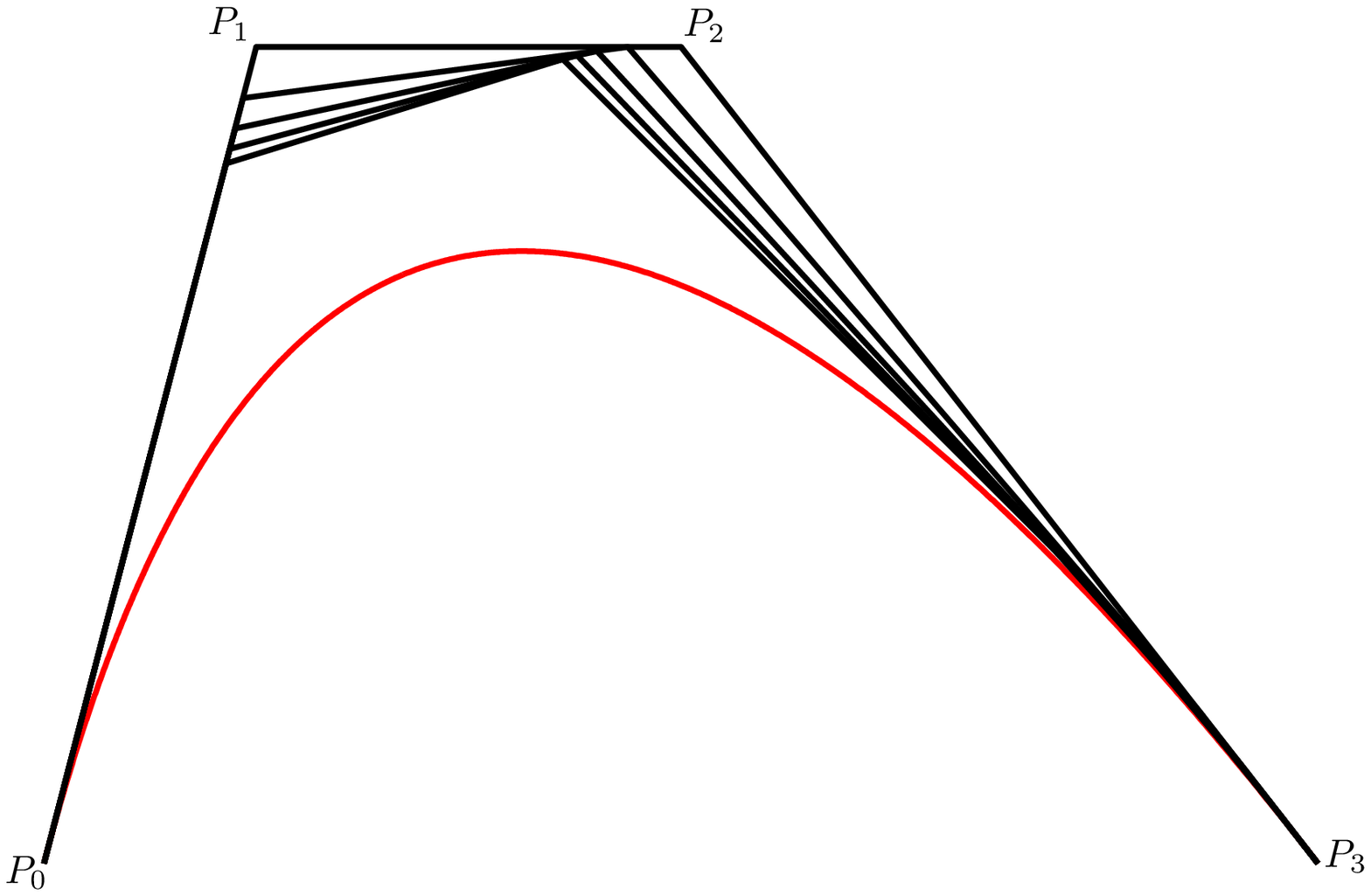}
\hskip 1.2 cm
\includegraphics[height=3.7cm]{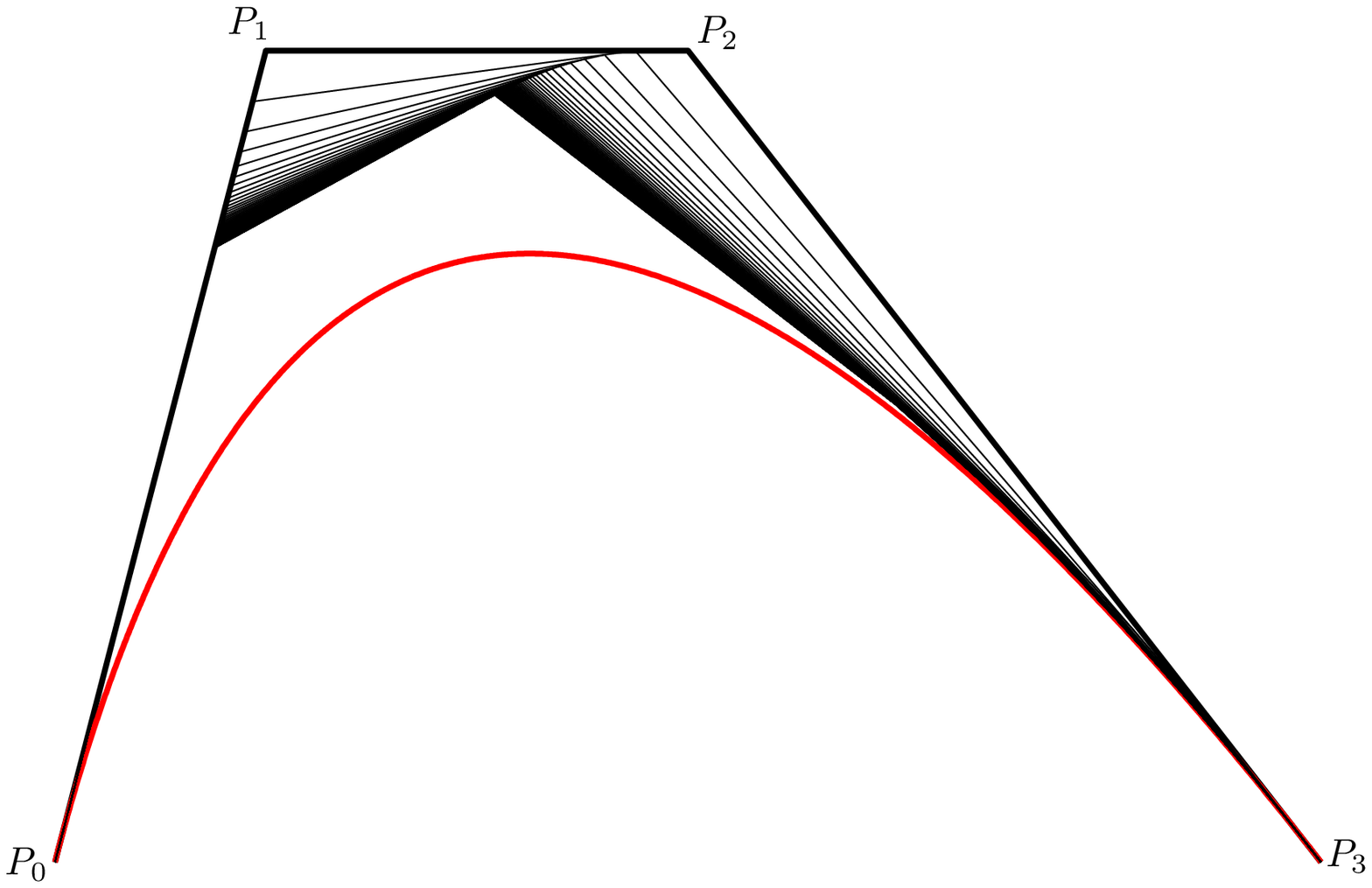}
\caption{The sequence of polygons generated by the corner 
cutting scheme (\ref{initial}) and (\ref{cornercutting})
and parameters $n=3, r_1=1, r_2=2, r_3 =3$ and $r_j = j^2$ for $j \geq 4$.
(left, four iterations of the scheme; right, 100 iterations of the scheme). 
The red curve is the B\'ezier curve associated with the control polygon 
$(P_0,P_1,P_2,P_3)$.}
\label{fig:figure3}
\end{figure}
Figure \ref{fig:figure4} shows the generated polygons from 100 iterations and 
also shows the Gelfond-B\'ezier curve associated with the \muntz space
$F = span(1,t^{r_1},t^{r_2},t^{r_3}) = span(1,t^2,t^4,t^5)$ and 
control polygon $(P_0,P_1,P_2,P_3)$. The figure suggests that the limiting 
polygon converges to the Gelfond-B\'ezier curve. Therefore,  
The main objective of this paper is to, effectively, prove the following 
\begin{theorem}\label{muntzelevation}
Let $n$ be a fixed positive integer and let 
$0 < r_1 < r_2 < ... r_n<r_{n+1}< ...<{r_m}<...$ be an 
infinite strictly increasing sequence of positive real numbers such that 
$\lim_{s\to\infty} r_{s} = \infty$. Then the limiting polygon
generated from a polygon $(P_0,P_1,...,P_n)$ in 
$\mathbb{R}^s, s \geq 1$ using the corner cutting scheme 
(\ref{initial}) and (\ref{cornercutting}) converges (pointwise 
and uniformly) to the Gelfond-B\'ezier curve associated with the \muntz space 
$(1,t^{r_1},t^{r_2},...,t^{r_n})$ and control polygon 
$(P_0,P_1,...,P_{n})$ if and only if the real numbers $r_i$ satisfy
the condition 
\begin{equation}\label{muntzcondition}
\sum_{i=1}^{\infty} \frac{1}{r_i} = \infty
\end{equation}
\end{theorem}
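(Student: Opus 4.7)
The plan is to interpret the corner cutting scheme as the dimension elevation process for Gelfond-B\'ezier curves, and then reduce the convergence question to the classical density question for \muntz spaces, which is settled by the full \muntz theorem of M\"untz--Sz\'asz--Clarkson.

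First, I would verify that the rule (\ref{cornercutting}) is exactly the dimension elevation formula for the Gelfond-Bernstein basis. More precisely, if $(P_0,\ldots,P_n)$ is the Gelfond-B\'ezier control polygon of a curve $\gamma$ in $F_n = \mathrm{span}(1,t^{r_1},\ldots,t^{r_n})$, then $(P_0^1,\ldots,P_{n+1}^1)$ computed from (\ref{cornercutting}) with $j=1$ should be the Gelfond-B\'ezier control polygon of the same curve $\gamma$ viewed as an element of $F_{n+1}$. Iterating, $(P_0^j,\ldots,P_{n+j}^j)$ is the control polygon of $\gamma$ in $F_{n+j}$. I would derive this by a direct computation on the Gelfond-Bernstein basis, using its explicit expression from Hirschman--Widder and Gelfond. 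This identification is crucial: it guarantees that all iterated polygons represent the same geometric curve, just in a tower of nested bases.

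Second, I would encode the Gelfond-Bernstein control points as expectations against probability measures on $[0,1]$. Since the weights $r_i/r_{n+j}$ in (\ref{cornercutting}) lie in $[0,1]$ and the Gelfond-Bernstein basis is a partition of unity, the positivity of the scheme yields a representation
\[
P_i^j = \int_0^1 \gamma(t)\, d\mu_i^j(t),
\]
where each $\mu_i^j$ is a probability measure whose moments are determined by products of the ratios $r_k/r_\ell$ occurring along the de Casteljau-type genealogy of $P_i^j$. Uniform convergence of the broken-line polygon to $\gamma$ then translates into weak concentration of the $\mu_i^j$ around an appropriate sequence of nodes $t_i^j \in [0,1]$; by a Korovkin/Weierstrass-style argument it suffices to check convergence of $\int t^{r_k}\, d\mu_i^j$ to the expected limit for every $k$, because the Gelfond-B\'ezier curves generate precisely the \muntz span.

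Third, and this is where the main obstacle sits, I would show that the required concentration of the $\mu_i^j$ is equivalent to the \muntz condition $\sum 1/r_i = \infty$. For sufficiency, one computes the variance of $\mu_i^j$ in closed form and shows, by a telescoping argument on the dimension elevation formula, that it is controlled by tails of $\sum 1/r_i$; divergence of this series then forces the variance to zero uniformly in $i$, giving uniform convergence of the control polygon. For necessity, if $\sum 1/r_i < \infty$, the full \muntz theorem produces a function in $C[0,1]$, in fact a coordinate function of some planar curve, that is bounded away from $\mathrm{span}(1,t^{r_1},t^{r_2},\ldots)$; applying the elevation process to a curve built from such a function will exhibit a persistent gap between the control polygon and $\gamma$. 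The delicate step is matching the analytic control of the $\mu_i^j$ with the density characterization, for which I expect the Gelfond integral representation of the Gelfond-Bernstein basis to be the key technical tool.

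Having established this equivalence, the theorem follows by invoking the classical full \muntz theorem, which states that under $r_i\to\infty$ the space $\mathrm{span}(1,t^{r_1},t^{r_2},\ldots)$ is dense in $C[0,1]$ precisely when $\sum 1/r_i = \infty$. The two directions of the if-and-only-if in Theorem~\ref{muntzelevation} then match the two directions of this \muntz characterization.
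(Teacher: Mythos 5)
Your first step (identifying the scheme (\ref{initial})--(\ref{cornercutting}) with dimension elevation of Gelfond--B\'ezier curves) matches the paper's Section 2, but the overall strategy --- reducing convergence to the density question and finishing by invoking the full M\"untz theorem --- has a genuine gap in both directions. For necessity, you propose to take a function that the full M\"untz theorem keeps at positive distance from $\mathrm{span}(1,t^{r_1},t^{r_2},\ldots)$ and to ``build a curve from it''; but the curve to which the elevation process is applied must already be a Gelfond--B\'ezier curve in the \emph{finite} space $\mathrm{span}(1,t^{r_1},\ldots,t^{r_n})$, i.e.\ it lies inside the M\"untz space, so non-density of the infinite span says nothing about whether the elevated control polygons of such a member converge back to it. The actual obstruction when $\sum_{i\ge 1}1/r_i<\infty$ is elementary and quantitative: with monotone real control points ($P_0=0$, $P_1=1$), the second elevated control point satisfies $b^m_1\to\delta=\prod_{j\ge 2}\left(1-r_1/r_j\right)>0$, the product converging precisely because the series does, so the control points leave a permanent gap near the start of the curve --- equivalently, the nodes $\eta_i(\Lambda_m)^{1/r_1}$ fail to be dense in $[0,1]$. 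No appeal to the density theorem is needed or helpful there.

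For sufficiency, your concentration-of-measures sketch omits the two technical pillars that carry the paper's proof, and the logic of your final step is circular: if you had really shown that the required concentration is \emph{equivalent} to $\sum 1/r_i=\infty$, citing the classical M\"untz theorem afterwards would be superfluous; and a reduction to density cannot be a formality, because the paper's own example with bounded exponents ($r_j=5-\frac1j$) shows that density of the M\"untz space does \emph{not} imply convergence of the scheme --- so any correct argument must use $\lim_{s\to\infty}r_s=\infty$ quantitatively. Concretely, you need (i) a quantitative statement identifying \emph{where} each elevated control point concentrates: the paper proves by induction on $n$ (a refinement of Prautzsch--Kobbelt, replacing $P$ by $Q(t)=\bar P(t)-\frac{r_1}{r_n}t\bar P'(t)$) that $\left|b^m_i-P\!\left(\eta_i(\Lambda_m)^{1/r_1}\right)\right|$ is bounded by sums of terms $\left|\eta^{(k)}_{i-k}(\Lambda_m)-\bigl(\eta^{(k+1)}_{i-k-1}(\Lambda_m)\bigr)^{\gamma_k}\right|$, which tend to zero uniformly under (\ref{twoconditions}) via a Lorentz-type product lemma; and (ii) a proof that the nodes $\eta_i(\Lambda_m)^{1/r_1}$ become dense in $[0,1]$, which the paper deduces from the Hirschman--Widder/Gelfond theorem on convergence of generalized Bernstein operators (a constructive result requiring both conditions, not mere density). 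Your ``variance in closed form, telescoped against tails of $\sum 1/r_i$'' is not substantiated for these bases, and even granting concentration, without node density the polygon need not converge to the whole curve. So the proposal is not a correct alternative route as it stands; repaired, it would essentially have to reproduce the paper's two ingredients rather than the M\"untz density theorem.
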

\begin{figure}
\vskip 0.1 cm
\hskip 2cm
\includegraphics[height=4.8cm]{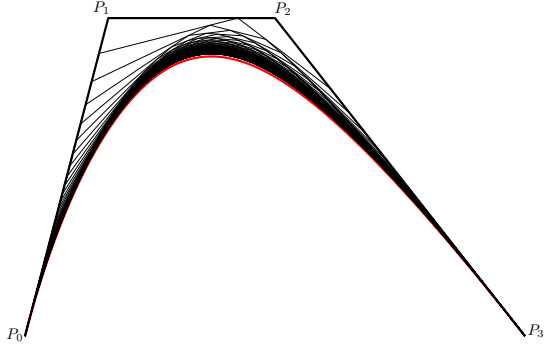}
\caption{The sequence of polygons generated from 100 iterations of 
the corner cutting scheme (\ref{initial}) and (\ref{cornercutting})
and parameters $n=3, r_1=2, r_2=4, r_3 =14$ and $r_j = 2j+10$ for $j \geq 4$. 
The red curve is the Gelfond-B\'ezier curve associated with the \muntz space 
$span(1,t^2,t^4,t^14)$ and control polygon $(P_0,P_1,P_2,P_3)$}
\label{fig:figure4}
\end{figure}
Let us contrast our theorem with the celebrated original 
\muntz theorem on the density of \muntz spaces \cite{muntz} 

\begin{theorem} {\bf{(\muntz Theorem)}}\label{muntztheorem}
Let $0 < r_1 < r_2 < ... r_n < r_{n+1}< ...<{r_m}<...$ be an
infinite strictly increasing sequence of positive real numbers such that 
$\lim_{s\to\infty} r_{s} = \infty$. The \muntz space 
$span(1,t^{r_1},...,t^{r_m},...)$ is a dense subset of $C([0,1])$ 
(the linear space of continuous functions on $[0,1]$ endowed with the uniform 
norm) if and only if 
\begin{equation*}
\sum_{i=1}^{\infty} \frac{1}{r_i} = \infty
\end{equation*}
\end{theorem}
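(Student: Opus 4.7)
My plan is to prove the \muntz density theorem by reducing to an explicit computation of the $L^2([0,1])$-distance from a single monomial $t^m$ to the span of $t^{r_1},\ldots,t^{r_N}$, and then to bridge from $L^2$ to uniform approximation. First I would invoke the Weierstrass theorem: the algebraic polynomials are uniformly dense in $C([0,1])$, so the \muntz space $M=\overline{\mathrm{span}}(1,t^{r_1},t^{r_2},\ldots)$ is dense in $C([0,1])$ if and only if every monomial $t^m$, $m=1,2,\ldots$, lies in $M$ (the constant is already present).

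The heart of the argument is the closed-form computation of
\[
d_N := \inf\bigl\{\|t^m-q\|_{L^2}: q\in\mathrm{span}(t^{r_1},\ldots,t^{r_N})\bigr\}.
\]
Since $\langle t^{r_i},t^{r_j}\rangle=1/(r_i+r_j+1)$, the Gram matrix $G_N$ is a Cauchy matrix; augmenting by $t^m$ and using $d_N^2=\det G_{N+1}/\det G_N$ together with the classical Cauchy-determinant formula yields
\[
d_N = \frac{1}{\sqrt{2m+1}}\prod_{k=1}^{N}\left|\frac{m-r_k}{m+r_k+1}\right|.
\]
Taking logarithms, $d_N\to 0$ iff $\sum_k -\log\bigl(1-\tfrac{2m+1}{m+r_k+1}\bigr)=\infty$; since $r_k\to\infty$ each summand is asymptotic to $(2m+1)/r_k$, so the divergence is equivalent to $\sum 1/r_k=\infty$. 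This establishes the theorem in the $L^2$ topology.

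For the ``only if'' direction in $C([0,1])$ the implication is cheap: the uniform closure is contained in the $L^2$-closure, so if $\sum 1/r_k<\infty$ then $d_N$ has a positive limit and $t^m$ (for any $m\notin\{r_k\}$) cannot be uniformly approximated. For the ``if'' direction I would use the integration trick to upgrade $L^2$ convergence to uniform convergence: by Cauchy--Schwarz, $|\int_0^t f(s)\,ds|\le\sqrt{t}\,\|f\|_{L^2}$, so if $p_N\to t^{m-1}$ in $L^2$ with $p_N\in\mathrm{span}(t^{r_k-1})$ then $\int_0^t p_N\to t^m/m$ uniformly, and the antiderivatives lie in $\mathrm{span}(t^{r_k})$. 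Thus one applies the distance formula not to $\{r_k\}$ itself but to the shifted sequence $\{r_k-1\}$ (which, being eventually positive, inherits the same \muntz condition since $\sum 1/(r_k-1)$ and $\sum 1/r_k$ converge together).

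The main obstacle I anticipate is this $L^2$-to-uniform passage together with the bookkeeping of the exponent shift: one must handle the case $r_k<1$ for small $k$ separately, verify that the distance formula for the shifted span still vanishes iff the original condition holds, and treat the trivial case $m\in\{r_k\}$. A secondary technical step is justifying the Cauchy-determinant identity for $d_N$; this is a direct calculation, but carrying it out cleanly—either by row/column manipulations of a bordered determinant or by explicit factorization of the Cauchy determinant—is where most of the algebraic effort goes.
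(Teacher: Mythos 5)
The paper does not actually prove this statement: Theorem \ref{muntztheorem} is quoted as classical background, attributed to M\"untz \cite{muntz} (see also \cite{borwein2}), and is never reproved in the text, so there is no internal proof to compare yours against line by line. What can be said is that your outline is the standard Gram-determinant proof of the M\"untz--Sz\'asz theorem and is correct in its essentials: the closed form $d_N=\frac{1}{\sqrt{2m+1}}\prod_{k=1}^{N}\left|\frac{m-r_k}{m+r_k+1}\right|$ via the Cauchy determinant, the equivalence of $\lim_{N}d_N=0$ with $\sum_k 1/r_k=\infty$, the cheap ``only if'' direction through the inclusion of the uniform closure in the $L^2$ closure, and the antiderivative trick for the ``if'' direction are all as in the textbook treatments. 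Three small points deserve explicit care. First, in the ``only if'' direction you must adjoin the exponent $0$ to the span (the M\"untz space contains the constants), which contributes only the extra factor $m/(m+1)$ to the product, and you must observe that an integer $m\notin\{r_k\}$ exists --- it does, since otherwise $\{r_k\}$ would contain all positive integers and $\sum 1/r_k$ would diverge. Second, in the ``if'' direction the shifted functions $t^{r_k-1}$ lie in $L^2([0,1])$ only when $r_k>1/2$, so you should discard the finitely many small exponents before applying the distance formula; this is harmless because deleting finitely many terms changes neither the divergence of $\sum 1/r_k$ nor the conclusion, as density of the smaller span implies density of the larger. Third, the comparison $\sum 1/(r_k-1)=\infty\iff\sum 1/r_k=\infty$ holds by limit comparison once those small exponents are removed. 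With these details filled in, your proposal is a complete and correct proof, and it is in fact more self-contained than the paper, which simply cites the result; note, though, that it is logically independent of the paper's actual contribution (Theorem \ref{muntzelevation}), whose proof proceeds by entirely different means.
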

The emergence of the \muntz condition (\ref{muntzcondition})
in both of Theorem \ref{muntzelevation} and Theorem \ref{muntztheorem}
is rather surprising and may suggest a deep relation between the 
problem of density in \muntz spaces and the convergence of corner cutting 
schemes. In fact, as we will show in section 2, the corner cutting scheme
(\ref{initial}) and (\ref{cornercutting}) can be interpreted as a dimension
elevation algorithm of Gelfond-B\'ezier curves. Therefore, Theorem 
\ref{muntzelevation} can be restated as claiming that under the condition 
that the sequence $0 < r_1 < r_2 ...< r_n <...$ satisfies 
$\lim_{s\to\infty} = \infty$, the density of the \muntz space 
$span(1,t^{r_1},...,t^{r_m},...)$ is equivalent to the convergence 
of the dimension elevation algorithm of Gelfond-B\'ezier curves 
to the underlying curve. We can push this analogy even further
as follows : It has been shown in \cite{ait-haddou1} that 
the Gelfond-Bernstein bases are limit of the Chebyshev-Bernstein 
bases of \muntz spaces over an interval $[a,1]$ as $a$ goes to zero.
From this property, it is not hard to show that the conditions of Theorem 
\ref{muntzelevation} are sufficient for the convergence of the dimension
elevation algorithm of a Chebyshev-B\'ezier curve in \muntz spaces to the 
underlying curve. As the Chebyshev-Bernstein bases over 
an interval $[a,b]$ can be defined for any linear space 
$E = span(1,u_1,...,u_m)$ such that the space 
$DE = span(u'_1,...,u'_m)$ is an extended Chebyshev space of order $m$ over 
the interval $[a,b]$ \cite{pottmann,mazure1}, we can ask for the following
more general question : Let $n$ be a fixed positive integer and let 
$u_1,u_2,...,u_n,...,u_m,...$ be an infinite sequence of $C^{\infty}$
functions over an interval $[a,b]$ such that for every $k \geq 1$, 
the space $E_k =span(1,u_1,u_2,...,u_k)$ is such that 
$DE_k = span(u'_1,u'_2...,u'_k)$ is an extended Chebyshev space of order $k$
over the interval $[a,b]$. For any function $F \in span(1,u_1,u_2,...,u_n)$ with 
control polygon $(P_0,...,P_n)$ over the interval $[a,b]$, 
we can define the control polygons of the dimension 
elevation algorithm \cite{mazure2} with respect to the nested spaces 
$E_{n} \subset E_{n+1} \subset ....\subset E_{m} \subset ...$,
the question is then 
\setcounter{equation}{3}
\newcounter{tmp_eqNum}
\setcounter{tmp_eqNum}{\value{equation}}
\renewcommand{\theequation}{\Alph{equation}}
\setcounter{equation}{16}
\begin{equation}
\begin{split}
&\textit{Is there a connection between the density of the space } 
E_{\infty} = span(1,u_1, \\
& u_2,...,u_n,...,u_m,...) \textit{as a subset of C([a,b]) and the convergence of the }\\
&\textit{associated dimension elevation algorithm to the underlying curve ?} \\
\end{split}
\end{equation}
\renewcommand{\theequation}{\arabic{equation}}
\setcounter{equation}{\value{tmp_eqNum}}
A hypothesis of equivalence is ruled out by the following fact: 
the condition $\lim_{s\to\infty} r_s = \infty$ can be dropped 
in \muntz theorem \ref{muntztheorem}, however, such condition is 
necessary in  Theorem \ref{muntzelevation}. For example, Figure 
\ref{fig:figure5} (left) shows the limiting polygon for the case 
$n = 3$, $r_1 = 1, r_2 = 2, r_3 = 3$ and 
$r_j = 5 - \frac{1}{j}$ for $j>3$. The limiting polygon 
does not converge to the B\'ezier curve with control polygon 
$(P_0,P_1,P_2,P_3)$. As it will be clear within this work, the main 
reason for the non-convergence of the dimension elevation algorithm 
to the underlying curve in this case is the fact that the set of 
control points $(\eta^{m}_i)_{0\leq i \leq m}$ of the function 
$t^{r_1}$ with respect to the \muntz space 
$span(1,t^{r_1},t^{r_2},...,t^{r_m})$ does not 
form a dense subset of the interval $[0,1]$ as $m$ goes to infinity.
It is interesting to note that for example when $n = 3$ and  
the real number $r_i$ are given by $r_1 = 1, r_2 = 2, r_3 = 3$ and 
$r_i = 50 -\frac{1}{i}$, the limiting polygon is very close to the 
underlying curve and yet does not converge to the curve, as shown in 
Figure \ref{fig:figure5} (right). 

Regarding question (Q), we conjecture the following 
scenario : 
\noindent{\it{If for any fixed positive integer $n$, the dimension elevation 
algorithm with respect to the nested spaces $E_{n} \subset E_{n+1} \subset ....
\subset E_{m} \subset ...$ over an interval $[a,b]$ converges to 
the underlying Chebyshev-B\'ezier curve then the space $E_\infty$ is dense as a subset 
of $C([a,b])$.}}
\vskip 0.1 cm  
\begin{figure}
\includegraphics[height=3.7cm]{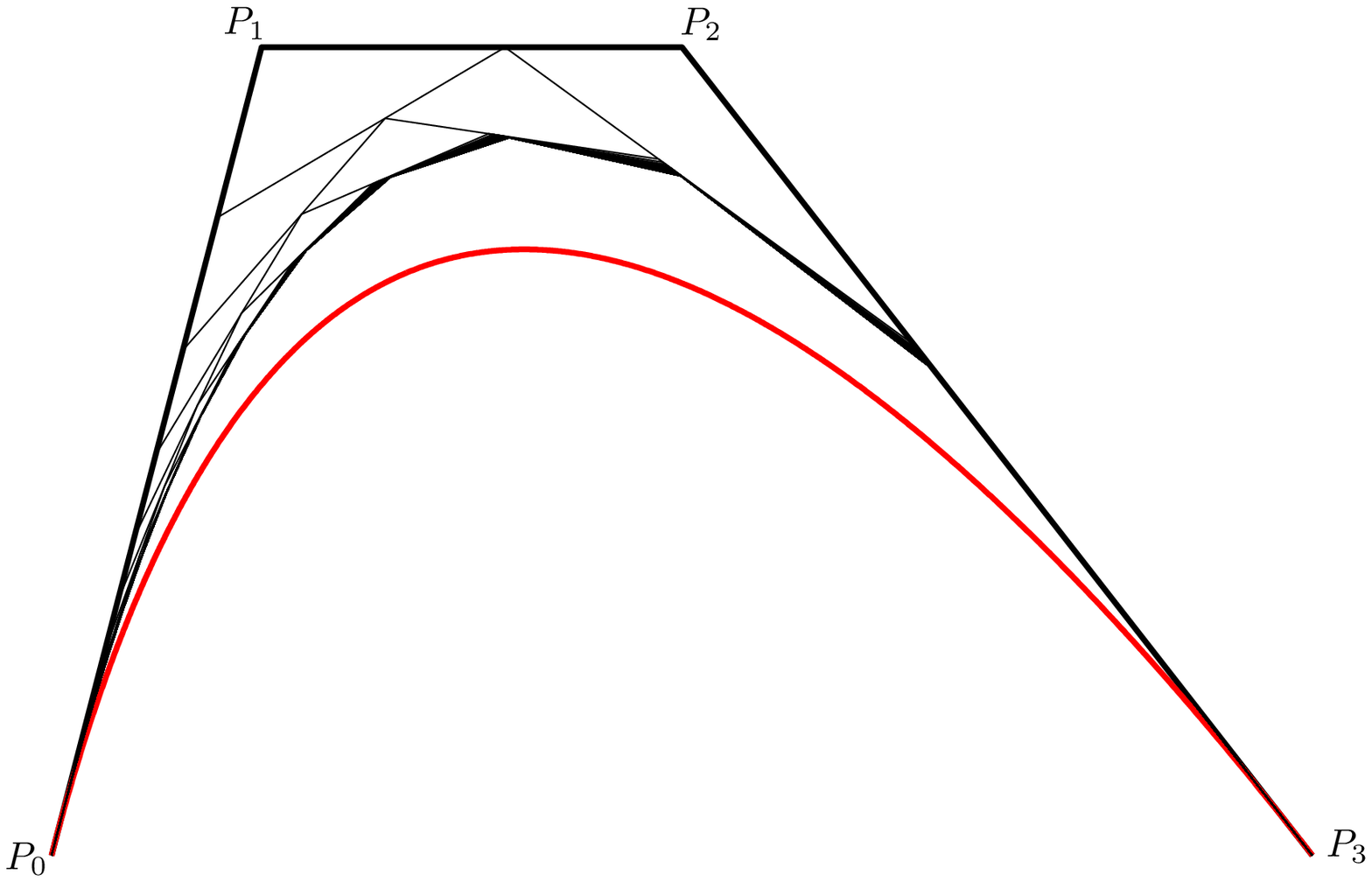}
\hskip 1.2 cm
\includegraphics[height=3.7cm]{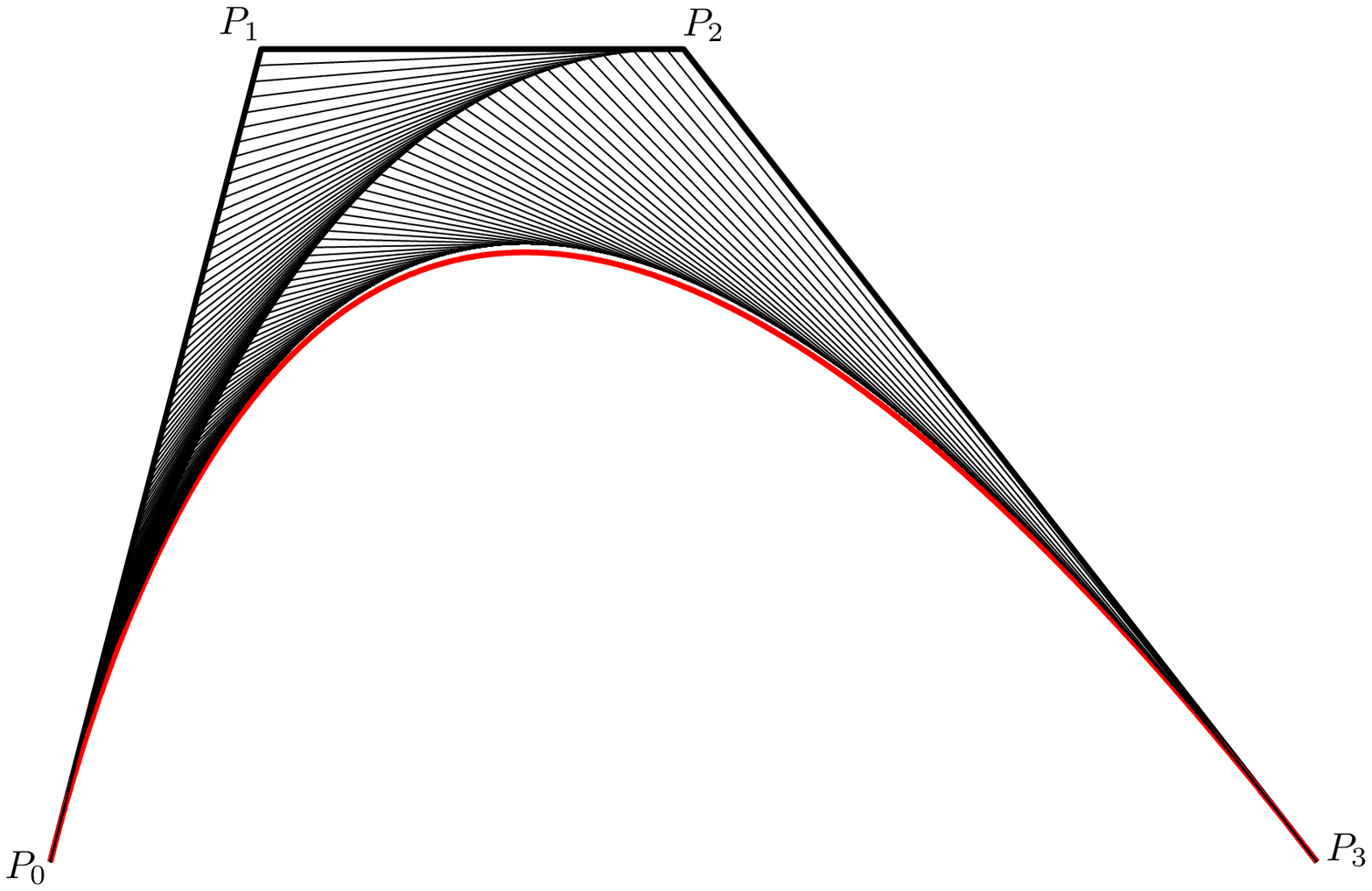}
\caption{The sequence of polygons generated from 100 iterations of 
the corner cutting scheme (\ref{initial}) and (\ref{cornercutting})
and parameters $n=3, r_1=1, r_2=2, r_3 =3$ and for the left figure 
we have $r_j = 5-\frac{1}{j}$ for $j \geq 4$
and for the right figure, $r_j = 50 - \frac{1}{j}$ for $j \geq 4$. 
The blue curve is the B\'ezier curve associated with 
the control polygon $(P_0,P_1,P_2,P_3)$.}
\label{fig:figure5}
\end{figure}

The proof of Theorem \ref{muntzelevation} consists
in first showing, in section 2, that the corner 
cutting scheme (\ref{initial}) and (\ref{cornercutting})
can be interpreted as a dimension elevation algorithm of 
the Gelfond-B\'ezier curves. This allows us, in section 3,
through a refinement of the elegant method of Prautzsch and 
Kobbelt \cite{kobbelt} to prove the theorem by induction on 
the fixed integer $n$.  
\section{Gelfond-B\'ezier curves}
Let $f$ be a smooth real function defined on an interval $I$. 
For any real numbers $x_0 \leq x_1 \leq ... \leq x_n$ in the interval $I$,
the divided difference $[x_0,...,x_n]f$ of the function $f$ supported
at the point $x_i, i=0,...,n$ is recursively defined by $[x_0]f = f(x_0)$ 
and  
\begin{equation}\label{dvdefinition}
[x_0,x_1,...,x_n]f = \frac{[x_1,...,x_n]f - [x_0,x_1,...,x_{n-1}]f}{x_n - x_0}
\quad \textnormal{if} \quad n>0.
\end{equation}
If some of the $x_i$ coincide, then the divided difference $[x_0,...,x_n]f$
is defined as the limit of (\ref{dvdefinition}) when the distance of the $x_i$ 
become arbitrary small. A simple inductive argument show that when the $x_i$ 
are pairwise distinct then we have 
{ \small {\begin{equation}\label{determinant}
[x_0,...,x_n]f = \sum^{n}_{i=0} \frac{f(x_i)}{\prod^{n}_{j=0, j \neq i}{(x_i - x_j)}}
= \frac{\left| \begin{array}{ccccc}
1 & x_{0} & \dots& x_{0}^{n-1} & f(x_0)  \\
1 & x_{1} & \dots& x_{1}^{n-1} & f(x_1)  \\
\dots & \dots & \dots & \dots            \\
1 & x_{n} & \dots& x_{n}^{n-1} & f(x_n) \\ 
\end{array} \right|} {V(x_0,x_1,...,x_n)},
\end{equation}}}
where $V(x_0,...,x_n)$ is the Vandermonde determinant. Note that by 
(\ref{dvdefinition}) the divided difference $[x_0,x_1,...,x_n]f$ 
is symmetric in the arguments $x_0,x_1,...x_n$.  
Consider, now, the function $f_{t}(x) = t^x$, where $t$ is viewed as 
a parameter. For a sequence $\Lambda= (0=r_0,r_1,...,r_n)$ of strictly
increasing real numbers, we denote by $E_{\Lambda}$ 
the \muntz space $E_{\Lambda} = span(t^{r_0},t^{r_1},...,t^{r_n})$.  
\begin{definition}
For a sequence $\Lambda=(0=r_0,r_1,...,r_n)$ of strictly
increasing real numbers, the Gelfond-Bernstein basis of the 
\muntz space $E_{\Lambda}$ with respect to the interval $[0,1]$
is defined by
$$
H^{n}_{k,\Lambda}(t) = (-1)^{n-k} r_{k+1}...r_{n} [r_k,...,r_n]f_{t}
\quad \textnormal{for} \quad k=0,...,n-1
$$
and 
$$
H^{n}_{n,\Lambda}(t) = t^{r_n}.
$$
\end{definition} 
The determinant representation of the divided differences (\ref{determinant})
shows that for $k=0,...,n-1$, the Gelfond-Bernstein basis can be expressed as 
\begin{equation}\label{lorentzdeterminant}
H^{n}_{k,\Lambda}(t) = \frac{r_{k+1}r_{k+2}...r_{n}}
{V(r_{k},r_{k+1},...,r_{n})} 
\left| \begin{array}{ccccc} 
t^{r_k} & 1 & r_{k} & \dots& r^{n-k-1}_{k}  \\
t^{r_{k+1}} & 1 & r_{k+1} & \dots& r^{n-k-1}_{k+1} \\
\dots & \dots & \dots & \dots \\
t^{r_{n}} & 1 & r_{n} & \dots& r^{n-k-1}_{n} \\ 
\end{array} \right|.
\end{equation}
Formula (\ref{lorentzdeterminant}) reiterate the fact that every function 
$H^{n}_{k,\Lambda},$ $k=0,...,n$ is an element of the space $E_{\Lambda}$.
The Gelfond-Bernstein basis possesses several properties that are similar to
the classical Bernstein basis over the interval $[0,1]$. For the sequence 
$\Lambda = (0,1,2,...,n)$, the Gelfond-Bernstein basis coincide with the 
classical Bernstein basis. Moreover, for any sequence 
$\Lambda=(0=r_0,r_1,...,r_n)$ of strictly
increasing real numbers, and for any $k = 0,...,n$, we have
\cite{ait-haddou1, lorentz} 
\begin{equation}\label{positivity} 
0 \leq H^{n}_{k,\Lambda} (t) \leq 1 
\quad \textnormal{for any}\quad t \in [0,1]
\end{equation}   
and for any $t \in [0,1]$, we have 
\begin{equation}\label{sumone}
\sum_{k=0}^n H^{n}_{k,\Lambda} (t) = 1.
\end{equation} 
Moreover, the Gelfond-Bernstein basis is totally positive on [0,1],
i.e. for any sequence $0 \leq t_0 < t_1 < ...< t_n \leq 1$, the matrix 
$(H^{n}_{k,\Lambda} (t_j))_{0 \leq k,j \leq n}$
is totally positive (i.e. all its minors are nonnegative). 
This property gives rise to the so-called variation diminishing
property of Gelfond-B\'ezier curve, i.e. given a Gelfond-B\'ezier curve 
$\Gamma$ with parametrization
\begin{equation}\label{gelfondbezier}
P(t) = \sum_{k=0}^{n} H^{n}_{k,\Lambda} (t) P_i
\quad \textnormal{with} \quad 
P_i \in \mathbb{R}^{s}, s \geq 1,
\end{equation}
the number of intersections of any hyperplane in $\mathbb{R}^s$
with $\Gamma$ does not exceed the number of intersection of the 
hyperplane with the control polygon $(P_0,P_1,...,P_n)$. 
Note also that for the Gelfond-B\'ezier curve in (\ref{gelfondbezier}),
we have $P(0) = P_0$ and $P(1) = P_n$. Figure \ref{fig:figure6} 
shows examples of Gelfond-B\'ezier curves associated with 
a single control polygon  $(P_0,P_1,P_2,P_3)$ and different sequences 
$\Lambda$. For a more thorough study of 
Gelfond-B\'ezier curves, we refer to \cite{ait-haddou1}.  
\begin{figure}
\hskip 2 cm
\includegraphics[height=5cm]{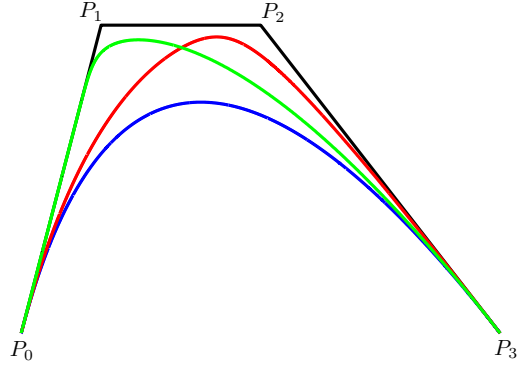}
\caption{Gelfond-B\'ezier curves associated with the control polygon 
$(P_0,P_1,P_2,P_3)$ and \muntz spaces : blue curve $span(1,t,t^2,t^3)$, 
red curve $span(1,t,t^2,t^{20})$, green curve $span(1,t^2,t^{50},t^{100})$.}
\label{fig:figure6}
\end{figure}

\begin{lemma}\label{iterationlemma}
Let $\Lambda_1 = (0=r_0,r_1,...,r_n)$ and $\Lambda_2 = (0=r_0,r_1,...,r_n,r_{n+1})$
be two sequences of strictly increasing real numbers.
Then, for $k=0,...,n$
\begin{equation}\label{iteration1}
H^{n}_{k,\Lambda_1}(t) = \frac{r_{n+1} - r_{k}}{r_{n+1}} H^{n+1}_{k,\Lambda_2}(t)
+ \frac{r_{k+1}}{r_{n+1}} H^{n+1}_{k+1,\Lambda_2}(t).
\end{equation}
\end{lemma}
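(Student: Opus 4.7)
The statement is essentially a translation of the standard recurrence for divided differences through the normalization that defines the Gelfond--Bernstein basis, so the plan is to start from the recursive definition (\ref{dvdefinition}) of divided differences applied to the two-parameter function $f_t(x)=t^x$ and then adjust the leading constants.

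First I would write down the key identity
\[
[r_k,\ldots,r_n]f_t \;=\; [r_{k+1},\ldots,r_{n+1}]f_t \;-\; (r_{n+1}-r_k)\,[r_k,\ldots,r_{n+1}]f_t,
\]
which is just the defining recurrence (\ref{dvdefinition}) rearranged to isolate the length-$(n-k+1)$ divided difference on the left. Multiplying both sides by $(-1)^{n-k}r_{k+1}\cdots r_n$ produces $H^{n}_{k,\Lambda_1}(t)$ on the left. On the right, the first term becomes $\tfrac{r_{k+1}}{r_{n+1}}\cdot(-1)^{n-k}r_{k+2}\cdots r_{n+1}\,[r_{k+1},\ldots,r_{n+1}]f_t$ after inserting the telescoping factor $r_{n+1}/r_{n+1}$, which is exactly $\tfrac{r_{k+1}}{r_{n+1}}H^{n+1}_{k+1,\Lambda_2}(t)$; the second term similarly becomes $\tfrac{r_{n+1}-r_k}{r_{n+1}}H^{n+1}_{k,\Lambda_2}(t)$ once the sign $(-1)^{n+1-k}$ and product $r_{k+1}\cdots r_{n+1}$ demanded by the definition of $H^{n+1}_{k,\Lambda_2}$ are reconciled with the minus sign coming from the divided difference identity. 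This handles all indices $k$ in the range $0\le k\le n-1$ for which both $H^{n+1}_{k,\Lambda_2}$ and $H^{n+1}_{k+1,\Lambda_2}$ fall under the divided-difference branch of the definition.

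The boundary case $k=n$ must be treated separately because $H^{n}_{n,\Lambda_1}(t)=t^{r_n}$ and $H^{n+1}_{n+1,\Lambda_2}(t)=t^{r_{n+1}}$ are defined by the second clause of the definition and are not instances of the divided-difference formula. For this case I would simply substitute: compute $H^{n+1}_{n,\Lambda_2}(t)=-r_{n+1}[r_n,r_{n+1}]f_t = r_{n+1}\tfrac{t^{r_n}-t^{r_{n+1}}}{r_{n+1}-r_n}$ from the definition and verify directly that
\[
\tfrac{r_{n+1}-r_n}{r_{n+1}}\,H^{n+1}_{n,\Lambda_2}(t) \;+\; \tfrac{r_{n+1}}{r_{n+1}}\,H^{n+1}_{n+1,\Lambda_2}(t) \;=\; (t^{r_n}-t^{r_{n+1}}) + t^{r_{n+1}} \;=\; t^{r_n},
\]
which is the desired identity.

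The main ``obstacle'' is really just careful bookkeeping of the signs $(-1)^{n-k}$ versus $(-1)^{n+1-k}$ and of the products $r_{k+1}\cdots r_n$ versus $r_{k+1}\cdots r_{n+1}$ and $r_{k+2}\cdots r_{n+1}$, together with the need to remember that the $k=n$ index falls outside the divided-difference formula. Once these are tracked correctly, no deeper machinery (no total positivity, no determinantal representation (\ref{lorentzdeterminant}), and no induction on $n$) is required; the lemma reduces to a one-line algebraic manipulation of (\ref{dvdefinition}) plus a direct verification at the top index.
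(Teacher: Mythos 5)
Your argument is correct and is essentially the paper's own proof: both rest on the rearranged divided-difference recurrence $[r_k,\ldots,r_n]f_t=[r_{k+1},\ldots,r_{n+1}]f_t-(r_{n+1}-r_k)[r_k,\ldots,r_{n+1}]f_t$ combined with the normalizing constants in the definition of the Gelfond--Bernstein basis, with the top index $k=n$ checked directly since $H^{n}_{n,\Lambda_1}$ and $H^{n+1}_{n+1,\Lambda_2}$ are not given by divided differences. The only difference is cosmetic (you build the right-hand side from the identity, the paper simplifies the right-hand side down to the left), so nothing further is needed.
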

\begin{proof}
From the definition of the Gelfond-Bernstein basis, the right hand 
side of equation (\ref{iteration1}) is given by ( for $k \leq n-1$)
\begin{equation}\label{iterationequation1}
(-1)^{n-k} r_k r_{k+1}...r_{n} \left( [r_{k+1},...,r_{n+1}]f_t - 
(r_{n+1} - r_{k}) [r_{k},...,r_{n+1}]f_t \right).
\end{equation}
From the definition of the divided difference, we have 
\begin{equation*}
[r_{k+1},...,r_{n+1}]f_t - [r_{k},...,r_{n}]f_t = (r_{n+1} - r_{k})
[r_{k},...,r_{n+1}]f_t.
\end{equation*}
Inserting the last equation into (\ref{iterationequation1})
conclude the proof of the lemma for $k \leq n-1$.
For $k=n$, the left hand side of (\ref{iteration1}) is equal to 
\begin{equation*}
t^{r_{n+1}} - (r_{n+1}-r_{n})[r_n,r_{n+1}]f_t = t^{r_n} = 
H^{n}_{n,\Lambda_1}(t).  
\end{equation*}
\end{proof}
Let $\Lambda_1$ and $\Lambda_2$ be the two sequences
given in Lemma \ref{iterationlemma}, 
and let $P$ be an element of the \muntz 
space $E_{\Lambda_1}$. As $E_{\Lambda_1} 
\subset E_{\Lambda_2}$, the function $P$ can be 
expressed in both of the Gelfond-Bernstein bases associated
with the two spaces as  
\begin{equation}\label{expansion1}
P(t) = \sum_{k=0}^{n} H_{k,\Lambda_1}^{n}(t) P_{k} =
\sum_{k=0}^{n+1} H_{k,\Lambda_2}^{n+1}(t) \tilde{P}_{k}.
\end{equation}
Using Lemma \ref{iterationlemma} to detect the coefficients 
of $H_{k,\Lambda_2}^{n+1}(t)$ in the expansion 
(\ref{expansion1}), we readily find 
\begin{corollary}\label{elevationtheorem2}
The Gelfond-B\'ezier points $\tilde{P_k}$ in (\ref{expansion1})
are related to the Gelfond-B\'ezier points $P_{k}$ by the relations 
\begin{equation}\label{initial2}
\tilde{P}_0 = P_0, \quad \tilde{P}_{n+1} = P_{n},
\end{equation}
and for $k=1,2,...,n$
\begin{equation}\label{elevationequation}
\tilde{P}_k = \frac{r_{k}}{r_{n+1}} \; P_{k-1} + 
\left( 1 - \frac{r_k}{r_{n+1}} \right) \; P_{k}.
\end{equation}
\end{corollary}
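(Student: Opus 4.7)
My plan is to treat Corollary \ref{elevationtheorem2} as a direct bookkeeping consequence of Lemma \ref{iterationlemma}. The function $P$ has two expansions as in (\ref{expansion1}), and since $(H^{n+1}_{k,\Lambda_2})_{0 \le k \le n+1}$ is a basis of $E_{\Lambda_2}$, the coefficients $\tilde{P}_k$ are uniquely determined. So it suffices to take the first expansion, substitute the identity (\ref{iteration1}) for each $H^n_{k,\Lambda_1}(t)$, and read off the coefficient of each $H^{n+1}_{k,\Lambda_2}(t)$.

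Concretely, I would replace every $H^n_{k,\Lambda_1}(t) P_k$ in $\sum_{k=0}^n H^n_{k,\Lambda_1}(t) P_k$ by
\[
\frac{r_{n+1}-r_k}{r_{n+1}}\,H^{n+1}_{k,\Lambda_2}(t)\,P_k + \frac{r_{k+1}}{r_{n+1}}\,H^{n+1}_{k+1,\Lambda_2}(t)\,P_k,
\]
then shift the index $k \mapsto k-1$ in the second piece and regroup by the common factor $H^{n+1}_{k,\Lambda_2}(t)$. For an interior index $1 \le k \le n$ this yields exactly
\[
\tilde{P}_k = \frac{r_{n+1}-r_k}{r_{n+1}}\,P_k + \frac{r_k}{r_{n+1}}\,P_{k-1},
\]
which is (\ref{elevationequation}). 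For the endpoint $k=0$, only the first piece of the substitution contributes, giving $\tilde{P}_0 = \frac{r_{n+1}-r_0}{r_{n+1}} P_0 = P_0$ since $r_0 = 0$. For $k = n+1$ only the shifted second piece contributes, giving $\tilde{P}_{n+1} = \frac{r_{n+1}}{r_{n+1}} P_n = P_n$, which is (\ref{initial2}).

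There is really no obstacle: the argument is pure algebra, the only place requiring a moment of attention being the boundary terms $k=0$ and $k=n+1$, where one must use $r_0=0$ and the fact that Lemma \ref{iterationlemma} handles the top index $k=n$ via the identity $H^n_{n,\Lambda_1}(t) = t^{r_n} = \frac{r_{n+1}-r_n}{r_{n+1}} H^{n+1}_{n,\Lambda_2}(t) + \frac{r_{n+1}}{r_{n+1}} H^{n+1}_{n+1,\Lambda_2}(t)$ already established there. Uniqueness of the coefficients in the Gelfond-Bernstein basis of $E_{\Lambda_2}$ then finishes the proof.
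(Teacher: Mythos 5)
Your proposal is correct and is exactly the paper's argument: the paper also obtains the corollary by substituting the identity of Lemma \ref{iterationlemma} into the expansion (\ref{expansion1}) and reading off the coefficients of each $H^{n+1}_{k,\Lambda_2}$, relying on the uniqueness of those coefficients. Your treatment of the boundary indices $k=0$ and $k=n+1$ (using $r_0=0$ and the $k=n$ case of the lemma) just makes explicit what the paper leaves as ``we readily find.''
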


Note that equations (\ref{initial2}) and (\ref{elevationequation})
describe the first iteration of the corner cutting scheme (\ref{initial})
and (\ref{cornercutting}). Therefore, the corner cutting scheme can 
be can interpreted as an iterative dimension elevation of 
the Gelfond-B\'ezier curve $P$ with respect to the nested 
\muntz spaces 
$E_{\Lambda_n} \subset E_{\Lambda_{n+1}} \subset ...E_{\Lambda_{m}} \subset ...$, 
where $\Lambda_k$ refers to the sequence $\Lambda_k = (0=r_0,r_1,...,r_k)$.   

For later use, we will need the following two lemmas, in which 
we omit the proofs as they can be readily obtained from the determinant 
representation (\ref{lorentzdeterminant}) of the Gelfond-Bernstein bases. 
 
\begin{lemma}\label{lemmat}
Let $H^{n}_{k,\Lambda}, k=0,...,n$ be the Gelfond-Bernstein basis associated
with the sequence $\Lambda = (0=r_0,r_1,r_2,...,r_n)$. Then, for $k=0,...,n$
we have 
\begin{equation*}
t H^{n}_{k,\Lambda}(t) = \frac{\prod_{j=k+1}^{n} r_j}
{\prod_{j=k+1}^{n} (r_j+1)}
H^{n+1}_{k+1,\Lambda_1}(t),
\end{equation*}
where $\Lambda_1 = (0=r_0,1,r_1+1,r_2+1,...,r_n+1)$
\end{lemma}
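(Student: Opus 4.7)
The plan is to reduce the identity to a direct comparison of the determinantal formula (\ref{lorentzdeterminant}) applied to each side. For $H^{n}_{k,\Lambda}(t)$ with $\Lambda=(0,r_1,\dots,r_n)$, the determinant has rows indexed by $r_k,r_{k+1},\dots,r_n$ and first column $t^{r_j}$. For $H^{n+1}_{k+1,\Lambda_1}(t)$ with $\Lambda_1=(0,1,r_1+1,\dots,r_n+1)$, the relevant rows (positions $k+1,\dots,n+1$ in $\Lambda_1$) correspond to the values $r_k+1,r_{k+1}+1,\dots,r_n+1$, and the first column has entries $t^{r_j+1}$. The two matrices therefore have the same size, $(n-k+1)\times(n-k+1)$, and first columns differing only by a uniform shift of the exponent.

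Next I would handle each of the three ingredients of (\ref{lorentzdeterminant}) in turn. The Vandermonde denominator is translation-invariant, so $V(r_k+1,\dots,r_n+1)=V(r_k,\dots,r_n)$. In the matrix, the columns indexed by $(r_j+1)^{\ell}$ for $\ell=0,1,\dots,n-k-1$ can be replaced column-by-column by $r_j^{\ell}$ via the binomial expansion $(r_j+1)^{\ell}=r_j^{\ell}+\text{lower-order terms in } r_j$; subtracting the appropriate combination of earlier (lower-$\ell$) columns is an upper-triangular unipotent operation and preserves the determinant. The first column is left untouched, and the identity $t^{r_j+1}=t\cdot t^{r_j}$ allows one to pull a common factor of $t$ out of it, leaving precisely the matrix appearing in $H^{n}_{k,\Lambda}(t)$.

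Collecting the remaining numerator prefactors $s_{k+2}\cdots s_{n+1}=\prod_{j=k+1}^{n}(r_j+1)$ on the $H^{n+1}_{k+1,\Lambda_1}$ side and $\prod_{j=k+1}^{n} r_j$ on the $H^{n}_{k,\Lambda}$ side yields the claimed ratio. The boundary case $k=n$ is handled separately against the definitions $H^{n}_{n,\Lambda}(t)=t^{r_n}$ and $H^{n+1}_{n+1,\Lambda_1}(t)=t^{r_n+1}$, where both products are empty and the claim reduces to $t\cdot t^{r_n}=t^{r_n+1}$. The only subtle point, and the place where one must be careful, is the shifted indexing of $\Lambda_1$: it has $n+2$ entries due to the insertion $s_1=1$, so one must check that the correspondence $s_j=r_{j-1}+1$ for $j\geq 1$ correctly aligns the rows of the two determinants. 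Once this alignment is in place the argument is purely linear-algebraic and no further analytic input is needed.
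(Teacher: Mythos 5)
Your argument is correct and is exactly the route the paper intends: the paper omits the proof, stating it "can be readily obtained from the determinant representation (\ref{lorentzdeterminant})", and your column operations (translation invariance of the Vandermonde, binomial reduction of the power columns, factoring $t$ out of the first column via $t^{r_j+1}=t\cdot t^{r_j}$), together with the separate check of the boundary case $k=n$, supply precisely those details with the index alignment $s_j=r_{j-1}+1$ handled correctly.
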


\begin{lemma}\label{pbarlemma}
Let $\alpha$ be a positive real number, and 
let $H^{n}_{k,\Lambda_1}, k=0,...,n$ (resp. $H^{n}_{k,\Lambda_2},
k=0,...,n$) be the Gelfond-Bernstein basis associated
with the sequence $\Lambda_1 = (0=r_0,r_1,r_2,...,r_n)$ (resp. 
$\Lambda_2 = (0=\alpha r_0,\alpha r_1,\alpha r_2,...,\alpha r_n)$.
Then, for $k=0,...,n$, we have
\begin{equation*}
H^{n}_{k,\Lambda_1}(t^{\alpha}) = H^{n}_{k,\Lambda_2}(t).
\end{equation*}
\end{lemma}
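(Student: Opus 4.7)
The plan is to work directly from the determinant representation (\ref{lorentzdeterminant}) of the Gelfond-Bernstein basis, exploit multilinearity of the determinant in its columns, and show that all powers of $\alpha$ introduced by the rescaling $r_j\mapsto \alpha r_j$ cancel against those arising from the Vandermonde and the leading product factor. I treat the case $k=n$ separately, since by definition $H^{n}_{n,\Lambda_1}(t^{\alpha})=(t^{\alpha})^{r_n}=t^{\alpha r_n}=H^{n}_{n,\Lambda_2}(t)$.

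For $0\le k\le n-1$, I would expand $H^{n}_{k,\Lambda_2}(t)$ using (\ref{lorentzdeterminant}) applied to the sequence $\Lambda_2$. The first column has entries $t^{\alpha r_j}=(t^{\alpha})^{r_j}$, and the remaining columns have entries $(\alpha r_j)^{p}=\alpha^{p} r_j^{p}$ for $p=0,1,\dots,n-k-1$. Factoring $\alpha^{p}$ out of the $(p{+}2)$-th column of the determinant produces an overall factor of $\alpha^{0+1+\cdots+(n-k-1)}=\alpha^{(n-k)(n-k-1)/2}$, after which the remaining determinant is exactly the determinant appearing in the representation of $H^{n}_{k,\Lambda_1}$, only with $t$ replaced by $t^{\alpha}$ in the first column.

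Next I would simplify the prefactor. The numerator is $(\alpha r_{k+1})\cdots(\alpha r_{n})=\alpha^{n-k}\,r_{k+1}\cdots r_{n}$, while the Vandermonde in the denominator satisfies
\[
V(\alpha r_k,\alpha r_{k+1},\dots,\alpha r_n)=\alpha^{(n-k)(n-k+1)/2}\,V(r_k,r_{k+1},\dots,r_n),
\]
since it is homogeneous of degree $\binom{n-k+1}{2}$ in its arguments. Combining the three contributions, the total exponent of $\alpha$ is
\[
(n-k)+\tfrac{(n-k)(n-k-1)}{2}-\tfrac{(n-k)(n-k+1)}{2}=(n-k)+\tfrac{(n-k)}{2}\bigl[(n-k-1)-(n-k+1)\bigr]=0.
\]
Hence all $\alpha$-factors disappear, leaving precisely the determinant expression for $H^{n}_{k,\Lambda_1}(t^{\alpha})$, which is what we wanted.

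The only subtlety is bookkeeping: one must keep the indices of the columns and the size $(n-k+1)$ of the matrix aligned correctly so that the Vandermonde exponent $\binom{n-k+1}{2}$ and the column-extraction exponent $\binom{n-k}{2}$ combine properly with the $\alpha^{n-k}$ from the product. There is no conceptual obstacle beyond this arithmetic; the statement is essentially the homogeneity of the Gelfond-Bernstein construction under simultaneous rescaling of the exponent sequence and the substitution $t\mapsto t^{\alpha}$.
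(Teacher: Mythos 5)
Your proposal is correct, and it follows exactly the route the paper intends: the paper omits the proof of Lemma \ref{pbarlemma}, stating only that it ``can be readily obtained from the determinant representation (\ref{lorentzdeterminant})'', which is precisely the computation you carry out, with the exponent bookkeeping $(n-k)+\binom{n-k}{2}-\binom{n-k+1}{2}=0$ done correctly and the $k=n$ case handled separately as required.
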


\section{The convergence of the dimension elevation algorithm}
The fundamental idea for the proof of Theorem \ref{muntzelevation}
is essentially simple, and can be viewed as a refinement of the 
method of Prautzsch and Kobbelt \cite{kobbelt}.
However, in practice, the simplicity of the idea is overshadowed
by the complexity of the technical details. Therefore, to exhibit
the fundamental idea of the proof, we will first use it for the proof
the classical fact that the control polygons of the degree elevation 
of a B\'ezier curve converge to the underlying curve.

Let $P$ be a polynomial of degree $n$ represented in the Bernstein basis 
,over the interval $[0,1]$, of degree $m > n$ as 
\begin{equation}\label{beziern}
P(t) = \sum_{i=0}^{m} b_{i}^{m} B^{m}_{i}(t).
\end{equation}
By induction on $n$, we will prove that 
\begin{equation}\label{convergence}
\max_{i}|b_{i}^{m} - P(\frac{i}{m})| = O(\frac{1}{m}).
\end{equation}
For $n \leq 1$, we have $b_{i}^{m} = P(i/m)$, since the 
Bernstein-B\'ezier representation has linear precision.
Now, let us assume that (\ref{convergence}) hold for polynomials
of degree $n-1$. Given a polynomial $P$ of degree $n$ 
with the Bernstein representation (\ref{beziern}),
we consider the polynomial $Q$ defined by 
$$
Q(t) = P(t) - \frac{t}{n}P'(t).
$$
The polynomial $Q$ is of degree $n-1$. Moreover, from (\ref{beziern}),
we have 
\begin{equation}\label{equationQ}
Q(t) =  \sum_{i=0}^{m} b_{i}^{m} B^{m}_{i}(t) - \frac{t}{n} 
\sum_{i=0}^{m-1} c_{i}^{m-1} B^{m-1}_{i}(t),
\end{equation}
where we have written the derivative $P'$ as 
\begin{equation*}
P'(t) = \sum_{i=0}^{m-1} c_{i}^{m-1} B^{m-1}_{i}(t).
\end{equation*} 
We can give explicit expressions for the coefficients $c_i^{m-1}$ but, 
as we will see, such expressions will not be needed.
Now, using the fact that
$$
t B_{i}^{m-1}(t) = \frac{i+1}{m} B_{i+1}^{m}(t)
$$
we obtain from (\ref{equationQ})
\begin{equation*}
Q(t) =  b_0^{m} B^{m}_{0}(t) +  \sum_{i=1}^{m} \left( b_{i}^{m} - 
\frac{i}{m n} c_{i-1}^{m-1} \right) B^{m}_{i}(t). 
\end{equation*}  
The induction hypothesis on $Q$ shows that 
\begin{equation*}
\max_{i} \left| \left( P(\frac{i}{m}) - b_{i}^{m} \right) - \frac{i}{m n}
\left( P'(\frac{i}{m}) - c_{i-1}^{m-1} \right) \right| = O(\frac{1}{m}). 
\end{equation*}
The last equation leads to (using the fact that $i \leq m$) 
\begin{equation}\label{equationmax}
\max_{i} | P(\frac{i}{m}) - b_{i}^{m}| = O(\frac{1}{m}) + 
\frac{1}{n} \max_{i} | P'(\frac{i}{m}) - c_{i-1}^{m-1} |. 
\end{equation}
Now, the polynomial $P'$ is also of degree $n-1$ and therefore, 
we can apply the induction hypothesis on $P'$, namely we have 
\begin{equation*}
\max_{i} |P'(\frac{i}{m-1}) - c_{i}^{m-1}| = O(\frac{1}{m-1}).
\end{equation*}
We have 
\begin{equation*}
|P'(\frac{i}{m}) - c_{i-1}^{m-1}| \leq  |P'(\frac{i}{m}) - P'(\frac{i-1}{m-1})| 
+ |P'(\frac{i-1}{m-1}) - c_{i-1}^{m-1}|.  
\end{equation*}
Therefore, we have 
\begin{equation*}
\max_{i} |P'(\frac{i}{m}) - c_{i-1}^{m-1}| \leq |\frac{i}{m}
- \frac{i-1}{m-1}| E + O(\frac{1}{m}) = O(\frac{1}{m}),
\end{equation*}
where $E = \max_{[0,1]} |P''(x) |$. Inserting the last equation into 
(\ref{equationmax}) conclude the proof of (\ref{convergence}).

\vskip 0.5 cm 
{\bf{Notations:}}
In order to apply the previous idea to the case of arbitrary \muntz 
spaces, we will first set some notations. We define the following 
difference operator $\Delta$ acting on sequences as follows : 
If $\Lambda_m = (0=r_0,r_1,r_2,...,r_m)$ is a sequence of   
strictly increasing real numbers, then  
\begin{equation*}
\Delta\Lambda_m = (0=r_0,r_2-r_1,r_3-r_1,...,r_m-r_1).
\end{equation*} 
For $1 \leq k \leq m-1$, we define the sequence 
$\Delta^{k}\Lambda_m$ iteratively using the 
equation
\begin{equation*}
\Delta^{k}\Lambda_m = \Delta(\Delta^{k-1}\Lambda_m) 
\quad \textnormal{with} \quad
\Delta^0\Lambda_m = \Lambda_m  
\end{equation*}
Therefore, we have 
\begin{equation*}
\Delta^{k}\Lambda_m = (0=r_0,r_{k+1}-r_{k},r_{k+2}-r_{k},...,r_{m}-r_{k}).
\end{equation*}
Now, for $i=0,...,m-k$, we denote by $\eta_{i}^{(k)}(\Lambda_m)$
the $i$th control point of the function $t^{r_{k+1}-r_{k}}$ with respect
to the \muntz space $E_{\Delta^{k}\Lambda_m}$, namely, we have 
\begin{equation*}
\eta_{0}^{(k)}(\Lambda_m) = 0; \quad \eta_{m-k}^{(k)}(\Lambda_m) = 1
\end{equation*}
and for $i=1,...,m-k-1$
\begin{equation}\label{controlpoints}
\eta_{i}^{(k)}(\Lambda_m) = \prod_{j=i+k+1}^{m} 
\left( 1 - \frac{r_{k+1}-r_{k}}{r_{j}-r_{k}} \right)
\end{equation}
We will adopt the convention that if $i <0$, then 
$\eta_{i}^{(k)}(\Lambda_0) = 0$ and also write  
$\eta_{i}^{(0)}(\Lambda_m)$ simply as $\eta_{i}(\Lambda_m)$.
We have

\begin{theorem}\label{polartheorem}
Let $\Lambda_n = (0=r_0,r_1,....,r_n)$ be a sequence 
of strictly increasing real numbers and let 
$\Lambda_m = (0=r_0,r_1,...,r_n,....,r_m)$ be a longer
sequence of strictly increasing numbers. 
Let $P$ be an element of the \muntz space $E_{\Lambda_n}$ 
written in the Gelfond-Bernstein bases of $E_{\Lambda_n}$ 
and $E_{\Lambda_m}$ as 
\begin{equation*}
P(t) = \sum_{i=0}^{n} p_{i}
H_{i,\Lambda_n}^{n}(t) = \sum_{i=0}^{m} b^{m}_{i}
H_{i,\Lambda_m}^{m}(t).
\end{equation*}
Then, there exist $(n-1)$ constants $C_k$ depending only on the 
function $P$ and the finite parameters $r_1,...,r_n$, such that 
\begin{equation}\label{polarequation}
\left| P(\eta_{i}(\Lambda_m)^{1/r_1}) - 
b_{i}^{m}\right| \leq \sum_{k=0}^{n-2} C_k 
\left|\eta_{i-k}^{(k)}(\Lambda_m) - 
\left( \eta_{i-(k+1)}^{(k+1)}(\Lambda_m) \right)^
{\frac{r_{k+1}- r_k}{r_{k+2}-r_{k+1}}}\right|
\end{equation}
for all $i=0,...,m$. We adopt the convention that $\sum_{k=0}^{-1} = 0$.
\end{theorem}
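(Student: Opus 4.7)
The plan is to induct on $n$, adapting the Bernstein warm-up proof from the excerpt to the M\"untz setting via the tools of Section 2. For the base case $n=1$, the Gelfond-Bernstein basis of $E_{\Lambda_1}$ is $\{1 - t^{r_1},\, t^{r_1}\}$; by linearity of the dimension-elevation scheme together with the defining property of $\eta_i(\Lambda_m)$ as the control points of $t^{r_1}$ in $E_{\Lambda_m}$, we obtain $b_i^m = p_0 + (p_1 - p_0)\,\eta_i(\Lambda_m) = P(\eta_i(\Lambda_m)^{1/r_1})$ exactly, and the empty-sum convention yields the statement.

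For the inductive step, mimic the polynomial choice $Q = P - (t/n)P'$ by setting
\begin{equation*}
Q(t) := P(t) - \frac{1}{r_n}\, t\, P'(t).
\end{equation*}
Writing $P = \sum_i a_i t^{r_i}$ monomially, the $t^{r_n}$ coefficient of $Q$ vanishes, placing $Q$ in $E_{\Lambda_{n-1}}$, and $tP'(t) = t^{r_1} R(t)$ with $R \in E_{\Delta\Lambda_n}$. The key algebraic ingredient is the M\"untz--Bernstein identity
\begin{equation*}
t^{r_1}\, H^{m-1}_{j,\Delta\Lambda_m}(t) = \eta_{j+1}(\Lambda_m)\, H^{m}_{j+1,\Lambda_m}(t),
\end{equation*}
the exact analogue of $tB^{m-1}_i(t) = \tfrac{i+1}{m}B^{m}_{i+1}(t)$, which I would verify from the determinantal representation (\ref{lorentzdeterminant}) together with Lemmas \ref{lemmat} and \ref{pbarlemma}. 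Expanding $R = \sum_j d_j^{m-1} H^{m-1}_{j,\Delta\Lambda_m}$ in $E_{\Delta\Lambda_m}$ then exhibits the coefficients of $tP'$ in $E_{\Lambda_m}$ as $c_i^m = \eta_i(\Lambda_m)\, d_{i-1}^{m-1}$ (with $c_0^m = 0$), so the coefficients of $Q$ in $E_{\Lambda_m}$ are $b_i^m - \eta_i(\Lambda_m)\, d_{i-1}^{m-1}/r_n$.

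Set $\tau := \eta_i(\Lambda_m)^{1/r_1}$, $\sigma := (\eta^{(1)}_{i-1}(\Lambda_m))^{1/(r_2-r_1)}$, and denote by $\mathcal{E}_k$ the $k$-th summand on the right-hand side of (\ref{polarequation}). The induction hypothesis applied to $Q \in E_{\Lambda_{n-1}}$ gives
\begin{equation*}
\Bigl| Q(\tau) - b_i^m + \tfrac{\eta_i(\Lambda_m)}{r_n}\, d_{i-1}^{m-1}\Bigr| \leq \sum_{k=0}^{n-3} C_k^Q\, \mathcal{E}_k.
\end{equation*}
Since $Q(\tau) = P(\tau) - (\eta_i(\Lambda_m)/r_n)\, R(\tau)$, the triangle inequality reduces the problem to estimating $|R(\tau) - d_{i-1}^{m-1}|$. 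The hypothesis applied to $R$ (also a length-$(n-1)$ case) in the longer sequence $\Delta\Lambda_m$, using the re-indexing $\eta^{(k)}(\Delta\Lambda_m) = \eta^{(k+1)}(\Lambda_m)$, yields
\begin{equation*}
|R(\sigma) - d_{i-1}^{m-1}| \leq \sum_{k=0}^{n-3} C_k^R\, \mathcal{E}_{k+1}.
\end{equation*}
A mean-value-type comparison $|R(\tau) - R(\sigma)| \leq C^M\, \mathcal{E}_0$ then closes the induction: combining the three estimates by triangle inequality and using $\eta_i(\Lambda_m) \leq 1$ yields the full sum $\sum_{k=0}^{n-2} C_k\, \mathcal{E}_k$ with constants depending only on $P$ and $r_1, \ldots, r_n$.

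The main obstacle will be the mean-value comparison $|R(\tau) - R(\sigma)| \leq C^M\, |\tau^{r_1} - \sigma^{r_1}|$. Passing to $\Phi(x) := R(x^{1/r_1})$ rewrites the quantity as $|\Phi(\tau^{r_1}) - \Phi(\sigma^{r_1})|$, but the naive mean value theorem fails when $r_2 < 2r_1$ because $\Phi'$ develops a power-type singularity at $0$; the remedy is an integral estimate that still bounds the difference linearly in $|\tau^{r_1} - \sigma^{r_1}|$, with a constant depending only on $R$ (hence $P$) and $r_1$. A secondary delicate point is the algebraic verification of the identity $t^{r_1} H^{m-1}_{j,\Delta\Lambda_m} = \eta_{j+1}(\Lambda_m)\, H^{m}_{j+1,\Lambda_m}$; a direct check in small cases recovers the constant $\eta_{j+1}(\Lambda_m) = \prod_{\ell=j+2}^m (1 - r_1/r_\ell)$, and the general case should follow by a careful row/column reduction of the associated determinants.
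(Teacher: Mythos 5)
Your proposal follows essentially the same route as the paper's proof: induction on $n$ with linear precision as the base case, the auxiliary function $Q = P - \tfrac{1}{r_n}\,tP'$, a multiplication identity producing the factors $\eta_i(\Lambda_m)$ in the elevated coefficients, two applications of the induction hypothesis (to $Q$ and to the derivative-type function), the index shift $\eta^{(k)}_i(\Delta\Lambda_m)=\eta^{(k+1)}_i(\Lambda_m)$, and a final triangle inequality using $\eta_i(\Lambda_m)\le 1$. The only structural difference is cosmetic: the paper first rescales all exponents by $1/r_1$ (Lemma \ref{pbarlemma}) so that it can multiply by $t$ and invoke Lemma \ref{lemmat}, whereas you multiply by $t^{r_1}$ directly; your identity $t^{r_1}H^{m-1}_{j,\Delta\Lambda_m}(t) = \eta_{j+1}(\Lambda_m)\,H^{m}_{j+1,\Lambda_m}(t)$ is correct, and it needs no determinant reduction at all: since $t^{r_1}f_t(x)=f_t(x+r_1)$, the divided-difference definition of the Gelfond--Bernstein functions gives it in one line, with constant $\prod_{\ell=j+2}^{m}\frac{r_\ell-r_1}{r_\ell}=\eta_{j+1}(\Lambda_m)$.

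The one step that remains unsettled is exactly the one you call the main obstacle, and your sketched remedy does not work as stated. When $r_2<2r_1$, the function $\Phi(x)=R(x^{1/r_1})$ contains the term $x^{(r_2-r_1)/r_1}$ with exponent in $(0,1)$; such a function is H\"older but not Lipschitz near $0$, so no integral estimate can yield $|\Phi(a)-\Phi(b)|\le C\,|a-b|$ with a constant depending only on $R$ and $r_1$ for arbitrary $a,b\in[0,1]$ (take $b=0$ and let $a\to 0$). To rescue a linear bound you would have to exploit the specific product structure of the two evaluation points $\eta_i(\Lambda_m)$ and $(\eta^{(1)}_{i-1}(\Lambda_m))^{r_1/(r_2-r_1)}$, not merely their difference. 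In fairness, the paper's own proof has the same soft spot: it takes $C=\max_{[0,1]}|\bar P''(x)|$, which is finite only when $r_2\ge 2r_1$ (or when the offending coefficients vanish), and it does not comment on the remaining case. An alternative repair, sufficient for the only use made of Theorem \ref{polartheorem} (namely Theorem \ref{intermediate}, where one just needs the right-hand side to tend to zero uniformly in $i$), is to weaken (\ref{polarequation}) to a bound of the form $\sum_k \omega_k\bigl(|\eta^{(k)}_{i-k}(\Lambda_m)-(\eta^{(k+1)}_{i-(k+1)}(\Lambda_m))^{(r_{k+1}-r_k)/(r_{k+2}-r_{k+1})}|\bigr)$ with $\omega_k$ moduli of continuity depending only on $P$ and $r_1,\dots,r_n$; your argument delivers this version verbatim once the mean-value step is replaced by uniform continuity (or a H\"older estimate) of $\Phi$.
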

\begin{proof}
We will proceed by induction on $n$. For $n=1$ and as $\eta_{i}(\Lambda_m)$ 
are the control points of the function $t^{r_1}$ with respect to 
the \muntz space $E_{\Lambda_m}$ over the interval $[0,1]$,
we have $P(\eta_{i}(\Lambda_m)^{1/r_1}) - b_{i}^{m} = 0$
and the conclusion follows.
Let us assume the claim of the theorem is true for any element of a 
\muntz space of order less or equal to $n-1$. Let 
$P$ be an element of the space $E_{\Lambda_n}$ written 
in the Gelfond-Bernstein bases of $E_{\Lambda_n}$ 
and $E_{\Lambda_m}$ as
\begin{equation}\label{gelfondform}
P(t) = \sum_{i=0}^{n} p_{i}
H_{i,\Lambda_n}^{n}(t) = \sum_{i=0}^{m} b^{m}_{i}
H_{i,\Lambda_m}^{m}(t).
\end{equation}
Let us denote by $\bar{\Lambda}_n$ and 
$\bar{\Lambda}_m$ the sequences  
$\bar{\Lambda}_n = (0=r_0,1,r_2/r_1,...,r_n/r_1)$
and $\bar{\Lambda}_m = (0=r_0,1,r_2/r_1,...,r_n/r_1,....,r_m/r_1)$. 
To the function $P$ in (\ref{gelfondform}), we associate the function
$\bar{P}$ in the space $E_{\bar{\Lambda}_n}$ defined as 
\begin{equation}\label{pbar}
\bar{P}(t) = \sum_{i=0}^{n} p_{i}
H_{i,\bar{\Lambda}_n}^{n}(t).
\end{equation}
As the corner cutting scheme (\ref{initial}) and (\ref{cornercutting})
associated with a sequence $S = (r_1,...,r_l)$ is invariant by a multiplication of 
every elements of $S$ by the same scalar, we necessarily have
\begin{equation*}
\bar{P}(t) = \sum_{i=0}^{m} b_{i}^{m}
H_{i,\bar{\Lambda}_m}^{m}(t).
\end{equation*}
Consider the following function $Q$ defined by  
$$
Q(t) = \bar{P}(t) - \frac{r_1}{r_n} t \bar{P}'(t).
$$
It can be readily checked that the function $Q$ is an element of 
the \muntz space of order $n-1$, $E_{\Pi_{n-1}}$ where 
$\Pi_{n-1} = (0=r_0,1,r_2/r_1,....,r_{n-1}/r_1)$. 
Therefore, we can apply the induction hypothesis to the function $Q$.
Before we apply such induction, let us first express the function $Q$
in the Gelfond-Bernstein basis $H_{i,\bar{\Lambda}_m}^{m}$.
We have 
\begin{equation}\label{equationQ2}
Q(t) =  \sum_{i=0}^{m} b^{m}_{i} H_{i,\bar{\Lambda}_m}^{m}(t) -
\frac{r_1}{r_n} t  \sum_{i=0}^{m-1} c^{m-1}_{i} H_{i,\Gamma_{m-1}}^{m-1}(t),   
\end{equation}
where we have denoted $\bar{P}'$ as 
\begin{equation}\label{derivative}
\bar{P}'(t) = \sum_{i=0}^{m-1} c^{m-1}_{i} H_{i,\Gamma_{m-1}}^{m-1}(t),
\end{equation}
where $\bar{H}_{i,\Gamma_{m-1}}^{m-1}$ is the Gelfond-Bernstein basis 
with respect to the sequence 
$\Gamma_{m-1} = (0=r_0,(r_2/r_1)-1,(r_3/r_1)-1,...,(r_m/r_1)-1)$.
From Lemma \ref{lemmat}, we have 
\begin{equation*}
t H_{i,\Gamma_{m-1}}^{m-1}(t) = \prod_{j=i+2}^{m} \left(1 - \frac{r_1}{r_j} \right) 
H_{i+1,\bar{\Lambda}_m}^{m}(t) = \eta_{i+1}(\bar{\Lambda}_m)
H_{i+1,\bar{\Lambda}_m}^{m}(t). 
\end{equation*}
Therefore, from (\ref{equationQ2}), we have
\begin{equation*}
Q(t) = b_{0}^{m} H_{0,\bar{\Lambda}_m}^{m}(t) + 
\sum_{i=1}^{m} \left( b_{i}^{m} - \frac{r_1}{r_n} \eta_{i}(\bar{\Lambda}_m)
c^{m-1}_{i-1} \right) H_{i,\bar{\Lambda}_m}^{m}(t).   
\end{equation*}
The last equation shows that we have applied upon $Q$ a dimension elevation
from the \muntz space associated with the sequence $\Pi_{n-1}$ to the \muntz space
associated with the sequence $\bar{\Lambda}_m$. Moreover, it can easily
be checked that $\eta_{i}^{(k)}(\bar{\Lambda}_m) = \eta_{i}^{(k)}(\Lambda_m)$
for any $i$ and $k$.
Therefore, the induction hypothesis and the expression of $Q$ show 
that there exist $(n-2)$ constant $C_k$ depending only on the polynomial
$Q$ and the parameters $r_1,...,r_{n-1}$ such that for $i=0,...,m$ we have  
\begin{equation*}
\begin{split}
& \left| \left( \bar{P}(\eta_{i}(\Lambda_m)) - b_{i}^{m}\right) 
- \frac{r_1}{r_n} \eta_{i}(\Lambda_m)
\left(\bar{P}'(\eta_{i}(\Lambda_m))) - c_{i-1}^{m-1}\right) \right| \leq  \\
& \sum_{k=0}^{n-3} C_k 
\left|\eta_{i-k}^{(k)}(\Lambda_m) - \left( \eta_{i-(k+1)}^{(k+1)}(\Lambda_m) \right)^
{\frac{r_{k+1}-r_{k}}{r_{k+2}-r_{k+1}}}\right|.
\end{split}
\end{equation*}
Therefore, we have 
\begin{equation}\label{equationconvergence}
\begin{split}
\left| \bar{P}(\eta_{i}(\Lambda_m)) - b_{i}^{m} \right| \leq 
& \sum_{k=0}^{n-3} C_k 
\left|\eta_{i-k}^{(k)}(\Lambda_m) - \left( \eta_{i-(k+1)}^{(k+1)}(\Lambda_m) \right)^
{\frac{r_{k+1}-r_k}{r_{k+2}-r_{k+1}}}\right| +  \\
& \left| \frac{r_1}{r_n} \eta_{i}(\Lambda_m)
\left(\bar{P}'(\eta_{i}(\Lambda_m)) - c_{i-1}^{m-1}\right) \right|. 
\end{split}
\end{equation}
Now the function $\bar{P}'$ is an element of the \muntz space of order $n-1$,
$E_{\Gamma_{n-1}}$, where 
$\Gamma_{n-1} = (0=r_0,(r_2/r_1)-1,(r_3/r_1)-1,...,(r_n/r_1)-1)$. 
The expression (\ref{derivative}), shows that we have applied upon $\bar{P}'$ 
a dimension elevation from the \muntz space associated with the sequence
$\Gamma_{n-1}$ to the \muntz space associated with the sequence $\Gamma_{m-1}$.
Therefore, again by the induction hypothesis, there exist $(n-2)$ constants $E_k$ 
depending only  on $\bar{P'}$ and the parameters $r_1,...,r_n$ such that
\begin{equation*}
\left| \bar{P}'(\eta_{i}(\Gamma_{m-1})^{\frac{r_1}{r_2 - r_1}}) 
- c_{i}^{m-1} \right| \leq 
\sum_{k=0}^{n-3} E_k 
\left|\eta_{i-k}^{(k)}(\Gamma_{m-1}) - \left( \eta_{i-(k+1)}^{(k+1)}
(\Gamma_{m-1}) \right)^
{\frac{r_{k+2}-r_{k+1}}{r_{k+3}-r_{k+2}}}\right| 
\end{equation*}
It can be easily shown that $\eta_{i}^{(k)}(\Gamma_{m-1}) = \eta_{i}^{(k+1)}(\Lambda_m)$.
Therefore, we have 
\begin{equation*}
\left| \bar{P}'(\eta_{i}^{(1)}(\Lambda_m)^{\frac{r_1}{r_2 - r_1}}) - c_{i}^{m-1} \right| \leq 
\sum_{k=0}^{n-3} E_k 
\left|\eta_{i-k}^{(k+1)}(\Lambda_m) - \left( \eta_{i-(k+1)}^{(k+2)}(\Lambda_m) \right)^
{\frac{r_{k+2}-r_{k+1}}{r_{k+3}-r_{k+2}}}\right|. 
\end{equation*}
Moreover, we have 
\begin{equation*}
\begin{split}
\left| \bar{P}'(\eta_{i}(\Lambda_m)) - c_{i-1}^{m-1} \right| 
& \leq \left| \bar{P}'(\eta_{i}(\Lambda_m)) -  \bar{P}'((\eta_{i-1}^{(1)}
(\Lambda_m))^{\frac{r_1}{r_2 - r_1}}) \right| + \\
& \quad \left| \bar{P}'((\eta_{i-1}^{(1)}(\Lambda_m))^{\frac{r_1}{r_2 - r_1}})
- c_{i-1}^{m-1} \right|.
\end{split}
\end{equation*} 
Therefore,
\begin{equation*}
\begin{split}
& \left| \bar{P}'(\eta_{i}(\Lambda_m)) - c_{i-1}^{m-1} \right| \leq
\left| \eta_{i}(\Lambda_m) -  (\eta_{i-1}^{(1)}(\Lambda_m))^{\frac{r_1}{r_2 - r_1}})
\right| C + \\
&  \sum_{k=1}^{n-2} E_{k-1} 
\left|\eta_{i-k}^{(k)}(\Lambda_m) - \left( \eta_{i-(k+1)}^{(k+1)}(\Lambda_m) \right)^
{\frac{r_{k+1}-r_{k}}{r_{k+2}-r_{k+1}}}\right|
\end{split} 
\end{equation*}
where $C = max_{[0,1]}|\bar{P}''(x)|$. 
Inserting the last inequality into (\ref{equationconvergence}) and 
using the obvious fact that $|(\eta_{i}(\Lambda_m))| \leq 1$, show that 
there exist $(n-2)$ constants $L_i$ depending only on the polynomial
$\bar{P}$ and the real values $r_1,...,r_n$ such that 
\begin{equation}\label{lastequation}
\left| \bar{P}(\eta_{i}(\Lambda_m)) - 
b_{i}^{m}\right| \leq \sum_{k=0}^{n-2} L_k 
\left|\eta_{i-k}^{(k)}(\Lambda_m) - 
\left( \eta_{i-(k+1)}^{(k+1)}(\Lambda_m) \right)^
{\frac{r_{k+1}-r_{k}}{r_{k+2}-r_{k+1}}}\right|.
\end{equation}
In view of Lemma \ref{pbarlemma} and equation (\ref{pbar}), we have 
\begin{equation*}
\bar{P}(\eta_{i}(\Lambda_m)) = P(\eta_{i}(\Lambda_m)^{1/r_1}).
\end{equation*}
Inserting the last equation into (\ref{lastequation}) conclude 
the proof of the theorem.
\end{proof}

The following lemma is implicit in \cite{lorentz}, and even
more explicit in \cite{aldaz}, as our hypothesis are different
from the ones taken in the latter and for the seek of completeness,
we will include it proof. 
\begin{lemma}\label{lorentzlemma}
Let $\gamma$ be a strictly positive and let $a_j$ and $b_j$, $j=1,2...$  be
sequences of real numbers in $]0,1[$ such that 
\begin{equation}\label{condition1}
\lim_{j\to\infty} \frac{\ln b_j}{\ln a_j} = \gamma.
\end{equation}
Define $A_{i}(m) = \prod_{j=i+1}^m a_j$ and $B_{i}(m) = \prod_{j=i+1}^m b_j$
$(i<m)$ and let us assume that for any fixed $i$, we have 
\begin{equation}\label{condition2}
\lim_{m \to \infty} A_{i}(m) = 0
\quad \textnormal{and} \quad
\lim_{m \to \infty} B_{i}(m) = 0.
\end{equation}
Then 
\begin{equation*}
\lim_{m\to\infty} (A_{i}(m)^{\gamma} - B_{i}(m)) = 0
\quad \textnormal{uniformly in} \quad i.
\end{equation*}
\end{lemma}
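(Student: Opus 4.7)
The plan is to split the index range into a ``large $i$'' regime where the asymptotic (\ref{condition1}) gives a direct factor-wise comparison of $a_j$ and $b_j$, and a ``small $i$'' regime where there are only finitely many indices and the tail-product hypothesis (\ref{condition2}) forces both quantities to zero uniformly in $i$. Hypothesis (\ref{condition1}) alone cannot do the job, since it says nothing about the first few factors, which is exactly why (\ref{condition2}) is included in the statement.

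First, fix $\varepsilon\in(0,\gamma)$; by (\ref{condition1}) there exists $N$ such that $|\ln b_j-\gamma\ln a_j|\leq \varepsilon\,|\ln a_j|$ for every $j\geq N$. For $i\geq N$ and $m>i$, summing this inequality over $j=i+1,\ldots,m$ and using $\ln a_j<0$ yields
$$(\gamma+\varepsilon)\ln A_i(m)\leq \ln B_i(m)\leq (\gamma-\varepsilon)\ln A_i(m),$$
and exponentiating produces the sandwich $A_i(m)^{\gamma+\varepsilon}\leq B_i(m)\leq A_i(m)^{\gamma-\varepsilon}$, in which $A_i(m)^\gamma$ also lies. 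Hence
$$|A_i(m)^\gamma-B_i(m)|\leq A_i(m)^{\gamma-\varepsilon}-A_i(m)^{\gamma+\varepsilon}\leq \max_{x\in[0,1]}\bigl(x^{\gamma-\varepsilon}-x^{\gamma+\varepsilon}\bigr),$$
and an elementary calculus computation (solving $g'(x)=0$ for $g(x)=x^{\gamma-\varepsilon}-x^{\gamma+\varepsilon}$) shows this last maximum is bounded by $2\varepsilon/(\gamma+\varepsilon)$. This handles the regime $i\geq N$ uniformly, with a bound that does not depend on $m$.

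For the remaining indices $i<N$, I exploit that $a_j,b_j\in(0,1)$: writing $A_i(m)=\bigl(\prod_{j=i+1}^{N}a_j\bigr)A_N(m)\leq A_N(m)$ and similarly $B_i(m)\leq B_N(m)$, I obtain
$$|A_i(m)^\gamma-B_i(m)|\leq A_N(m)^\gamma+B_N(m),$$
a bound independent of $i<N$ which tends to $0$ as $m\to\infty$ by hypothesis (\ref{condition2}) applied at the fixed index $N$. Combining the two regimes, given $\eta>0$ I first choose $\varepsilon$ small enough that $2\varepsilon/(\gamma+\varepsilon)<\eta$ (this fixes $N$), then choose $M>N$ so that $A_N(m)^\gamma+B_N(m)<\eta$ for all $m\geq M$; the desired uniform convergence follows. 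The main delicacy is precisely the need to split at $N$: (\ref{condition1}) controls only the tail factors, while (\ref{condition2}) is exactly what rescues the finitely many initial factors on which no factor-wise estimate is available.
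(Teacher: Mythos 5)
Your proof is correct and follows essentially the same route as the paper's: both split the indices at a threshold supplied by (\ref{condition1}), derive the sandwich $A_i(m)^{\gamma+\varepsilon}\leq B_i(m)\leq A_i(m)^{\gamma-\varepsilon}$ on the tail, and dispose of the finitely many initial indices using (\ref{condition2}). Your only departures are in execution rather than strategy: you replace the paper's four-case analysis (on whether $A_j(m)\geq\epsilon$ and on the sign of $A_j(m)^{\gamma}-B_j(m)$) by the single clean bound $\sup_{x\in[0,1]}\bigl(x^{\gamma-\varepsilon}-x^{\gamma+\varepsilon}\bigr)\leq 2\varepsilon/(\gamma+\varepsilon)$, and you handle the indices $i<N$ by dominating $A_i(m)$ and $B_i(m)$ by $A_N(m)$ and $B_N(m)$ instead of invoking (\ref{condition2}) separately for each small index and taking a maximum of thresholds.
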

\begin{proof}
We should prove that for every $\epsilon_1 > 0$, there exist 
an $m_0$ such that for all $m \geq m_0$ we have
\begin{equation*} 
|A_{i}(m)^{\gamma} - B_{i}(m)| < \epsilon_1 
\quad \textnormal{for} \quad
i=1,2,...,m.
\end{equation*}
Let us fix an $\epsilon_1 >0$ and select an $\epsilon \in]0,1[$ such that 
\begin{equation*}
1 - \epsilon^{\epsilon} < \epsilon_1, 
\quad \epsilon^{\gamma} < \epsilon_1;
\quad \epsilon < \gamma \quad \textnormal{and} \quad  
\epsilon^{\gamma-\epsilon} < \epsilon_1.
\end{equation*}
Condition (\ref{condition1}) shows that there exists a $j_0$ such that for 
any $j \geq j_0$, we have 
\begin{equation}\label{logequation}
\gamma - \epsilon < \frac{log\, b_j}{log\, a_j} < \gamma + \epsilon. 
\end{equation}
Since $log\, a_j < 0$, (\ref{logequation}) imply that for any $j \geq j_0$, we have  
\begin{equation*}
a_j ^{\gamma+\epsilon} < b_j < a_j^{\gamma-\epsilon}.
\end{equation*}
The last equation shows that for any $j \geq j_0$ and for any 
$m \geq j_0$ we have 
\begin{equation*}
A_j(m)^{\gamma+\epsilon} \leq B_j(m) \leq A_j(m)^{\gamma-\epsilon}.
\end{equation*} 
We can rephrase the last assertion as follow : There exists an 
$m_0 = j_0$ such that for any $m \geq m_0$, we have
\begin{equation}\label{importantequation}
A_j(m)^{\gamma+\epsilon} \leq B_j(m) \leq A_j(m)^{\gamma-\epsilon}, 
\quad \textnormal{for} \quad j = j_0,j_0+1,...,m.
\end{equation} 
Let us fix $m \geq m_0=j_0$. If for a certain index $j \geq j_0$, we 
have $A_{j}(m)^{\gamma} \geq B_j(m)$ and $A_j(m) < \epsilon$, 
then, we have 
$$
0 \leq A_{j}(m)^{\gamma} - B_j(m) < \epsilon^{\gamma} < \epsilon_1.      
$$
If for a certain index $j \geq j_0$, we 
have $A_{j}(m)^{\gamma} \geq B_j(m)$ and $A_j(m) \geq \epsilon$, 
then from (\ref{importantequation}) and using the fact that $A_j(m) <1$, we have
$$
0 \leq A_{j}(m)^{\gamma} - B_j(m) \leq A_j(m)^{\gamma} - A_j(m)^{\gamma+\epsilon}
= A_j(m)^{\gamma} (1 - A_j(m)^{\epsilon}) < 1 - \epsilon^{\epsilon} < \epsilon_1.
$$ 
If for a certain index $j \geq j_0$, we have 
$A_{j}(m)^{\gamma} \leq B_j(m)$ and $A_j(m) < \epsilon$, 
then, from (\ref{importantequation}), we have
$$
0 \leq B_j(m) - A_j(m)^{\gamma} \leq A_{j}(m)^{\gamma-\epsilon} -A_{j}(m)^{\gamma}
\leq \epsilon^{\gamma-\epsilon} < \epsilon_1.
$$  
Finally, if for a certain index $j \geq j_0$, we have 
$A_{j}(m)^{\gamma} \leq B_j(m)$ and $A_j(m) \geq \epsilon$, 
then, from (\ref{importantequation}), we have 
$$
0 \leq B_j(m) - A_j(m)^{\gamma} \leq 1 - A_{j}^{\gamma}
\leq 1 - \epsilon^{\epsilon} < \epsilon_1.
$$  
As we have exhausted all the possible cases on the behavior of 
a pair of numbers $A_j(m)$ and $B_j(m)$ for a certain index $j \geq j_0$, 
the conclusion of theses cases show that for any $m \geq m_0=j_0$, we have
\begin{equation}\label{infiniteequation}
|A_j(m)^{\gamma} - B_j(m)| < \epsilon_1
\quad \textnormal{for} \quad j = j_0,j_0+1,...,m.
\end{equation}  
Now condition (\ref{condition2}), shows in particular that for any $j < j_0$, 
there exists an $M_0(j)$ such that for any $m \geq M_0(j)$, 
we have  
\begin{equation*}
|A_j(m)^{\gamma} - B_j(m)| < \epsilon_1.
\end{equation*}
As we have a finite set of $M_0(j), j=1,...,j_0-1$, if 
we denote by $L_0 = \max_{j=1,...,j_0-1}(M_0(j))$, then for any 
$m \geq L_0$, we have
\begin{equation}\label{finiteequation}
|A_j(m)^{\gamma} - B_j(m)| < \epsilon_1
\quad \textnormal{for} \quad j = 1,2,...,j_0-1.
\end{equation}    
Therefore, by taking $M_0 = max(m_0,L_0)$ and in view of 
(\ref{infiniteequation}) and (\ref{finiteequation}),
we have for any $m \geq M_0$
\begin{equation*}
|A_j(m)^{\gamma} - B_j(m)| < \epsilon_1
\quad \textnormal{for} \quad j = 1,2,...,m.
\end{equation*} 
\end{proof} 
From the last lemma, we can prove the following 
\begin{theorem}\label{intermediate}
Let $\Lambda_\infty = (0=r_0,r_1,...,r_n,....,r_m,...)$ be an infinite 
sequence of strictly increasing real numbers such that 
\begin{equation}\label{twoconditions}
\lim_{s\to \infty} r_s = \infty
\quad \textnormal{and} \quad
\sum_{i=1}^{\infty} \frac{1}{r_i} = \infty.
\end{equation}
For any integer $m$, we denote by $\Lambda_m$ the 
subsequence of $\Lambda_{\infty}$ given by 
$\Lambda_m = (0=r_0,r_1,...,r_m)$. Let $P$ be an element 
of the \muntz space $E_{\Lambda_n}$ written in the 
Gelfond-Bernstein bases of $E_{\Lambda_n}$ and $E_{\Lambda_m}$
($m \geq n$) as 
\begin{equation*}
P(t) = \sum_{i=0}^{n} p_{i}
H_{i,\Lambda_n}^{n}(t) = \sum_{i=0}^{m} b^{m}_{i}
H_{i,\Lambda_m}^{m}(t).
\end{equation*} 
Then
\begin{equation}\label{convergenceterm}
\lim_{m\to\infty} P(\eta_{i}(\Lambda_m)^{1/r_1}) - 
b_{i}^{m} = 0 
\quad \textnormal{uniformly in} \quad i.
\end{equation} 
\end{theorem}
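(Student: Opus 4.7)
The plan is to read Theorem \ref{polartheorem} as an estimate that has already reduced the desired uniform convergence to the vanishing of finitely many explicit ``index-comparison'' terms, and then to recognise each of these terms as exactly the quantity controlled by Lemma \ref{lorentzlemma} under the \muntz condition. The proof divides naturally into matching definitions, checking the two hypotheses of the lemma, and handling a handful of small-index boundary cases.

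First, by Theorem \ref{polartheorem} we have
\begin{equation*}
\bigl| P(\eta_i(\Lambda_m)^{1/r_1}) - b_i^m \bigr| \le \sum_{k=0}^{n-2} C_k \, \Bigl| \eta_{i-k}^{(k)}(\Lambda_m) - \bigl( \eta_{i-(k+1)}^{(k+1)}(\Lambda_m) \bigr)^{\gamma_k} \Bigr|,
\end{equation*}
where $\gamma_k := (r_{k+1}-r_k)/(r_{k+2}-r_{k+1}) > 0$ and the constants $C_k$ depend only on $P$ and on $r_1,\dots,r_n$, independently of $m$. Since the sum has only $n-1$ terms, it suffices to show that for each fixed $k$ the $k$th summand tends to $0$ uniformly in $i$ as $m \to \infty$. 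Unfolding the definition (\ref{controlpoints}) one sees that both control points in the $k$th summand are products running over the same range:
\begin{equation*}
\eta_{i-k}^{(k)}(\Lambda_m) = \prod_{j=i+1}^{m} b_j, \qquad \eta_{i-(k+1)}^{(k+1)}(\Lambda_m) = \prod_{j=i+1}^{m} a_j,
\end{equation*}
with $b_j := 1 - (r_{k+1}-r_k)/(r_j - r_k)$ and $a_j := 1 - (r_{k+2}-r_{k+1})/(r_j - r_{k+1})$. These are precisely the partial products $B_i(m)$ and $A_i(m)$ appearing in Lemma \ref{lorentzlemma}, and the desired bound becomes the conclusion of that lemma with exponent $\gamma = \gamma_k$.

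The two hypotheses of Lemma \ref{lorentzlemma} are then verified directly from the standing assumptions. Since $r_j \to \infty$, the expansion $\ln(1-x) = -x + O(x^2)$ gives $\ln b_j \sim -(r_{k+1}-r_k)/r_j$ and $\ln a_j \sim -(r_{k+2}-r_{k+1})/r_j$, whence $\ln b_j / \ln a_j \to \gamma_k$; this is (\ref{condition1}). For (\ref{condition2}), the \muntz divergence $\sum 1/r_i = \infty$, together with $r_j - r_k \sim r_j$, forces $\sum_j (r_{k+1}-r_k)/(r_j - r_k) = \infty$, and similarly for the $a$-series; hence $A_i(m) \to 0$ and $B_i(m) \to 0$ as $m \to \infty$ for every fixed $i$. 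Lemma \ref{lorentzlemma} therefore delivers the uniform vanishing $|B_i(m) - A_i(m)^{\gamma_k}| \to 0$.

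The only caveat is a finite number of small-$i$ boundary cases in which some of the factors $a_j$ or $b_j$ fail to lie in $]0,1[$; but for those indices the authors' convention makes the relevant $\eta^{(\cdot)}_{\cdot}$ equal to $0$, so the corresponding term in the bound is either identically zero or a single legitimate product that tends to $0$ for each fixed $i$ by (\ref{condition2}), and a finite set of indices cannot spoil uniformity. Summing the $n-1$ uniformly vanishing bounds yields (\ref{convergenceterm}). The principal difficulty is not conceptual but combinatorial: one must align the indices in $\eta^{(k)}_{i-k}$ and $\eta^{(k+1)}_{i-(k+1)}$ so that both become products over the common range $j = i+1, \dots, m$, at which point the \muntz condition slots into exactly the slot where Lemma \ref{lorentzlemma} requires it.
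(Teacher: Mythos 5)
Your proof is correct and follows essentially the same route as the paper's own: both reduce the claim via Theorem \ref{polartheorem} to the per-$k$ terms, identify $\eta_{i-k}^{(k)}(\Lambda_m)$ and $\eta_{i-(k+1)}^{(k+1)}(\Lambda_m)$ with the products $B_i(m)$ and $A_i(m)$ of Lemma \ref{lorentzlemma}, verify conditions (\ref{condition1}) and (\ref{condition2}) from the two hypotheses (\ref{twoconditions}), and dispose of the finitely many small-$i$ indices separately. One small inaccuracy, not a gap: in those boundary cases the factors $a_j,b_j$ never leave $]0,1[$; the actual issue is that the subscript of $\eta$ becomes nonpositive so the endpoint convention $\eta^{(\cdot)}_{\leq 0}=0$ applies, which is exactly how you then treat them.
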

\begin{proof}
In view of Theorem \ref{polartheorem}, we need only to show
that under the conditions (\ref{twoconditions}),
the right hand side of (\ref{polarequation}) converges 
to zero as $m$ goes to infinity, uniformly in $i$.
As we have finite terms in the sum in (\ref{polarequation}), 
we will only need to show that for any fixed $k$ such that 
$0\leq k \leq n-2$, we have
\begin{equation}\label{terms}
\lim_{m\to\infty} \eta_{i-k}^{(k)}(\Lambda_m) - 
\left( \eta_{i-(k+1)}^{(k+1)}(\Lambda_m) \right)^
{\frac{r_{k+1}- r_k}{r_{k+2}-r_{k+1}}} = 0
\quad \textnormal{uniformly in} \quad i.
\end{equation}  
Let us first deal with the indices $i$ such that $i > k+1$, 
in this case, if we denote by  
$$
a_j = \left(1 - \frac{r_{k+2} - r_{k+1}}{r_j - r_{k+1}} \right)
\quad \textnormal{and} \quad
b_j = \left(1 - \frac{r_{k+1} - r_{k}}{r_j - r_{k}} \right); \quad j> k+2,
$$
then, according to (\ref{controlpoints}), and imitating the notations 
of the Lemma \ref{lorentzlemma}, we have 
\begin{equation*}
\eta_{i-k}^{(k)}(\Lambda_0) = \prod_{j=i+1}^{m} b_j = B_i(m)
\quad \textnormal{and} \quad
\eta_{i-(k+1)}^{(k+1)}(\Lambda_0) = \prod_{j=i+1}^{m} a_j =A_i(m).
\end{equation*}
The fact that the sequence $(r_k)_{0\leq k \leq \infty}$ is 
strictly increasing, shows that $a_j$ and $b_j$ are elements 
of the interval $]0,1[$ ($j> k+2$). Moreover, as 
$\lim_{j\to\infty} r_j = \infty$ and using the l'Hospital's rule,
show that 
$$
\lim_{j\to\infty} \frac{\ln b_j}{\ln a_j} =  
\frac{r_{k+1}-r_{k}}{r_{k+2}-r_{k+1}} = \gamma >0.
$$
To prove that for any fixed $i>k+1$, we have $\lim_{m\to\infty}A_i(m) = 0$, 
we proceed as follows : Since $a_j \in ]0,1[$, we have 
\begin{equation}\label{infinityc}
\frac{1}{a_j} \geq 1 + \frac{r_{k+2}-r_{k+1}}{r_{j}-r_{k+1}}
\quad \textnormal{and then} \quad
\frac{1}{A_i(m)} \geq  \prod_{j=i+1}^{m} 
\left( 1 + \frac{r_{k+2}-r_{k+1}}{r_{j}-r_{k+1}} \right).
\end{equation}
Moreover, 
$$
\sum_{j=i+1}^{m} \frac{r_{k+2}-r_{k+1}}{r_{j}-r_{k+1}}
= (r_{k+2}-r_{k+1}) \sum_{j=i+1}^{m} \frac{1}{r_{j}-r_{k+1}}
\geq  (r_{k+2}-r_{k+1}) \sum_{j=i+1}^{m} \frac{1}{r_{j}}.
$$
Therefore, the conditions (\ref{twoconditions}) show that 
$$
\lim_{m\to\infty} \sum_{j=i+1}^{m} \frac{r_{k+2}-r_{k+1}}
{r_{j}-r_{k+1}} = \infty 
\quad \textnormal{thus} \quad
\lim_{m\to\infty} \prod_{j=i+1}^{m} 
\left( 1 + \frac{r_{k+2}-r_{k+1}}{r_{j}-r_{k+1}} 
\right) = \infty,
$$
which by (\ref{infinityc}) conclude that $\lim_{m\to\infty}A_i(m) = 0$.
Similar treatments for $b_j$ show that for any fixed $i>k+1$,
$\lim_{m\to\infty}B_i(m) = 0$. Therefore, applying Lemma \ref{lorentzlemma}
(after an obvious shift of indices) shows the convergence of 
(\ref{convergenceterm}) uniformly in $ i > k+1$. For $i < k+1$,
then the term in (\ref{terms}) is zero and 
if $i = k+1$ the term in (\ref{terms}) is $\eta_{1}^{(k)}(\Lambda_m)$,
which converges to zero as $m$ goes to infinity. Then, using the 
trick of finitude as at the end of the proof of Lemma \ref{lorentzlemma},
conclude the proof. 
\end{proof}
In order to conclude the proof of the main Theorem \ref{muntzelevation}
using Theorem \ref{intermediate}, we need to show that the point set 
$D_{m} = \{\eta_i(\Lambda_m)^{1/r_1}, i=0,...,m \}$ 
form a dense subset of the interval $[0,1]$ as $m$ goes to infinity. 
For this aim, we need the following result proven by Hirschman and Widder 
\cite{hirschman} and Gelfond \cite{gelfond}. 

\begin{theorem}\label{gelfondtheorem}
{(\bf{Hirschman-Widder \cite{hirschman}, Gelfond \cite{gelfond}})}

\noindent Let $\Lambda_{\infty} = (0=r_0,r_1,...,r_n,....,r_l,...)$
be an infinite sequence of strictly increasing real numbers
such that 
\begin{equation*}
\lim_{s\to \infty} r_s = \infty
\quad \textnormal{and} \quad
\sum_{i=1}^{\infty} \frac{1}{r_i} = \infty.
\end{equation*}
To every continuous function $f$ in the interval $[0,1]$, we associate 
the operator $B_n^{\Lambda_\infty}(f)$ defined as 
\begin{equation*}
B_m^{\Lambda_\infty}(f)(x) = \sum_{i=0}^{m} 
f(\eta_{i}(\Lambda_m)^{1/r_1}) H^{m}_{i,\Lambda_m}(x),
\end{equation*}
where $\Lambda_m = (0=r_0,r_1,...,r_m)$. Then the sequences 
$B_m^{\Lambda_\infty}(f)$ is uniformly convergent with limit $f$
as $m$ goes to infinity.  
\end{theorem}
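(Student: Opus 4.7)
The plan is to deduce Theorem~\ref{gelfondtheorem} from Theorem~\ref{intermediate} combined with the classical \muntz density theorem (Theorem~\ref{muntztheorem}) via a standard three-$\varepsilon$ argument. The decisive observation is that $B_m^{\Lambda_\infty}$ is a positive linear operator of norm one on $C([0,1])$: positivity is immediate from the non-negativity (\ref{positivity}) of the Gelfond-Bernstein basis, while the partition-of-unity identity (\ref{sumone}) gives both $B_m^{\Lambda_\infty}(1) = 1$ and the bound $\|B_m^{\Lambda_\infty}(f)\|_\infty \leq \|f\|_\infty$ for every $f \in C([0,1])$.

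First I would establish the conclusion on the dense subspace $\bigcup_{n\geq 1} E_{\Lambda_n}$. Fix $P \in E_{\Lambda_n}$ and, for $m \geq n$, expand $P = \sum_{i=0}^{m} b_{i}^{m} H^{m}_{i,\Lambda_{m}}$ in the Gelfond-Bernstein basis of $E_{\Lambda_m}$. Subtracting this expansion from the definition of $B_m^{\Lambda_\infty}(P)$ yields, for every $x \in [0,1]$,
\begin{equation*}
B_m^{\Lambda_\infty}(P)(x) - P(x) = \sum_{i=0}^{m} \bigl( P(\eta_i(\Lambda_m)^{1/r_1}) - b_i^m \bigr)\, H^{m}_{i,\Lambda_{m}}(x),
\end{equation*}
so properties (\ref{positivity}) and (\ref{sumone}) deliver the crucial estimate
\begin{equation*}
\| B_m^{\Lambda_\infty}(P) - P \|_\infty \leq \max_{0 \leq i \leq m} \bigl| P(\eta_i(\Lambda_m)^{1/r_1}) - b_i^m \bigr|,
\end{equation*}
and the right-hand side tends to zero as $m \to \infty$ by Theorem~\ref{intermediate}.

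To finish, fix $f \in C([0,1])$ and $\varepsilon > 0$. The hypotheses $\lim_{s\to\infty} r_s = \infty$ and $\sum_{i\geq 1} 1/r_i = \infty$ allow the application of the \muntz density theorem (Theorem~\ref{muntztheorem}), producing an integer $n$ and $P \in E_{\Lambda_n}$ with $\|f - P\|_\infty < \varepsilon/3$. The previous paragraph then supplies an $M$ such that $\|B_m^{\Lambda_\infty}(P) - P\|_\infty < \varepsilon/3$ whenever $m \geq M$, and the triangle inequality together with the operator-norm bound yields
\begin{equation*}
\| B_m^{\Lambda_\infty}(f) - f \|_\infty \leq \| B_m^{\Lambda_\infty}(f - P) \|_\infty + \| B_m^{\Lambda_\infty}(P) - P \|_\infty + \| P - f \|_\infty < \varepsilon.
\end{equation*}
The only non-trivial ingredient in this argument is the control-point estimate of Theorem~\ref{intermediate}; everything else is bookkeeping. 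In this sense, the Hirschman-Widder-Gelfond theorem is best viewed as a direct payoff of the heavy technical work already carried out for the dimension elevation algorithm, and I anticipate no substantive obstacle beyond verifying the operator-norm bound on $B_m^{\Lambda_\infty}$, which is essentially built into the Gelfond-Bernstein formalism.
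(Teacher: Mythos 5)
Your argument is mathematically sound, but note that the paper itself gives no proof of this statement: Theorem \ref{gelfondtheorem} is imported verbatim from Hirschman--Widder and Gelfond, whose original arguments work directly with the generalized Bernstein operators (concentration/peaking properties of the basis $H^m_{i,\Lambda_m}$), independently of anything in this paper. Your route is genuinely different: you treat $B_m^{\Lambda_\infty}$ as a positive, unit-norm operator (via (\ref{positivity}) and (\ref{sumone})), prove convergence on the dense subspace $\bigcup_n E_{\Lambda_n}$ by writing $B_m^{\Lambda_\infty}(P)-P=\sum_i \bigl(P(\eta_i(\Lambda_m)^{1/r_1})-b_i^m\bigr)H^m_{i,\Lambda_m}$ and invoking the uniform-in-$i$ estimate of Theorem \ref{intermediate}, and then transfer to all of $C([0,1])$ by the classical M\"untz density theorem and a three-$\varepsilon$ argument. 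This is non-circular within the paper's logic, since Theorem \ref{intermediate} rests only on Theorem \ref{polartheorem} and Lemma \ref{lorentzlemma}, neither of which uses Theorem \ref{gelfondtheorem} or Proposition \ref{densitytheorem}; the resulting dependency chain (M\"untz theorem $+$ Theorem \ref{intermediate} $\Rightarrow$ Theorem \ref{gelfondtheorem} $\Rightarrow$ Proposition \ref{densitytheorem}) is consistent. What your approach buys is self-containedness relative to the paper's own machinery, replacing the external Hirschman--Widder--Gelfond citation by the external (but more standard) M\"untz theorem, which the paper already states as Theorem \ref{muntztheorem}; what it costs is that it leans on the heavy Theorem \ref{intermediate} and on M\"untz density as black boxes, whereas the cited classical proof is independent of both and also yields quantitative information about the operators that your soft density argument does not. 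One small point worth making explicit if you write this up: an element produced by M\"untz density is a finite linear combination of the $t^{r_i}$, hence lies in $E_{\Lambda_n}$ for $n$ the largest index appearing, which is what licenses the application of Theorem \ref{intermediate} with that fixed $n$ and $m\geq n$.
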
 

Using the last theorem, we can now prove the following   
\begin{proposition}\label{densitytheorem}
Let $\Lambda_\infty = (0=r_0,r_1,...,r_n,....,r_l,...)$
be an infinite sequence of strictly increasing 
real numbers such that 
\begin{equation*}
\lim_{s\to \infty} r_s = \infty
\quad \textnormal{and} \quad
\sum_{i=1}^{\infty} \frac{1}{r_i} = \infty.
\end{equation*}
Denote by $D_m$ the point set 
$D_{m} = \{\eta_i(\Lambda_m)^{1/r_1}, i=0,...,m \}$. 
Then, as $m$ goes to infinity, the point set $D_m$ 
form a dense subset of the interval $[0,1]$.
\end{proposition}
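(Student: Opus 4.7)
The plan is to argue by contradiction, using the uniform-convergence statement of Theorem \ref{gelfondtheorem} together with a bump-function test. The key observation I would exploit is that, by (\ref{positivity}) and (\ref{sumone}), for each $x \in [0,1]$ the value $B_m^{\Lambda_\infty}(f)(x)$ is a convex combination of the numbers $f(\eta_i(\Lambda_m)^{1/r_1})$, $0 \leq i \leq m$. Consequently, if a continuous $f$ vanishes on all of $D_m$, then $B_m^{\Lambda_\infty}(f) \equiv 0$ on $[0,1]$.

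Suppose, toward a contradiction, that $D_m$ fails to become dense in $[0,1]$. Then there exist $\epsilon > 0$, indices $m_k \to \infty$, and points $x_k \in [0,1]$ with
\begin{equation*}
\inf_{y \in D_{m_k}} |x_k - y| \geq \epsilon \quad \text{for every } k.
\end{equation*}
Since $0 = \eta_0(\Lambda_{m_k})^{1/r_1}$ and $1 = \eta_{m_k}(\Lambda_{m_k})^{1/r_1}$ both belong to $D_{m_k}$, this lower bound forces $\epsilon \leq x_k \leq 1 - \epsilon$. Passing to a convergent subsequence, $x_k \to x^* \in [\epsilon, 1-\epsilon]$, and for all sufficiently large $k$ the open interval $J := (x^* - \epsilon/2,\, x^* + \epsilon/2)$ is contained in $(0,1)$ and disjoint from $D_{m_k}$.

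To conclude, I would choose a continuous tent function $f : [0,1] \to [0,1]$ with $f(x^*) = 1$ and $\operatorname{supp} f \subset \overline{J}$. By construction $f(\eta_i(\Lambda_{m_k})^{1/r_1}) = 0$ for every $i$, so $B_{m_k}^{\Lambda_\infty}(f) \equiv 0$ on $[0,1]$. But Theorem \ref{gelfondtheorem} asserts that $B_{m_k}^{\Lambda_\infty}(f) \to f$ uniformly on $[0,1]$, and evaluation at $x^*$ yields the impossibility $0 = f(x^*) = 1$. I do not expect any serious obstacle here: once Theorem \ref{gelfondtheorem} is granted, the only mild subtlety is keeping $x^*$ away from the endpoints $\{0,1\}$, which is why the trivial inclusion $\{0,1\} \subset D_m$ is essential. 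The argument is essentially the standard positive-operator duality between a Korovkin-type uniform-approximation statement and density of the evaluation nodes.
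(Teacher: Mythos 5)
Your proof is correct, and it rests on the same key input as the paper, namely Theorem \ref{gelfondtheorem}; the difference is in the choice of test function and the logical packaging. The paper argues pointwise: for a fixed $x_0$ and $\epsilon$ it takes the V-shaped piecewise linear function vanishing at $x_0$ (essentially a normalized distance to $x_0$), applies the uniform convergence of $B_m^{\Lambda_\infty}(f)$ at the single point $x_0$, and uses the nonnegativity (\ref{positivity}) and partition of unity (\ref{sumone}) of the Gelfond--Bernstein basis to conclude that if every node $\eta_i(\Lambda_m)^{1/r_1}$ were $\epsilon$-far from $x_0$, the value $B_m^{\Lambda_\infty}(f)(x_0)$ would exceed $\epsilon$, contradicting convergence. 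You instead place a tent function inside an alleged gap and use only linearity of the operator (vanishing at all nodes forces $B_{m_k}^{\Lambda_\infty}(f)\equiv 0$) together with uniform convergence evaluated at the peak $x^*$; the price is the compactness/subsequence step, which you need because you negate a uniform version of density (the convex-combination remark via (\ref{positivity}) and (\ref{sumone}) is not actually used in your argument). In return your route proves slightly more than the paper states, namely that the fill distance $\sup_{x\in[0,1]}\operatorname{dist}(x,D_m)$ tends to zero rather than just eventual $\epsilon$-approximation of each fixed point, and you correctly note the role of the endpoints $0=\eta_0(\Lambda_m)^{1/r_1}$ and $1=\eta_m(\Lambda_m)^{1/r_1}$ in keeping $x^*$ away from the boundary. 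Both arguments are instances of the same Korovkin-type duality between the operator convergence of Theorem \ref{gelfondtheorem} and density of its nodes, so the distinction is one of presentation rather than substance.
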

\begin{proof}
Let $\epsilon$ be a strictly positive real number and let $x_0$ be a 
real number in the interval $[0,1]$. 
Consider the continuous piecewise linear function $f$ 
defined as 
\begin{equation*}
f(x) = \begin{cases}
& \frac{-x}{x_0} +1 \quad \textnormal{if} \quad  
0 \leq x \leq x_0 \\
& \frac{x}{1-x_0} - \frac{x_0}{1-x_0} \quad \textnormal{if} \quad 
x_0 \leq x \leq 1.
\end{cases}
\end{equation*}
As the function $f$ is continuous, then by Theorem \ref{gelfondtheorem},
there exist an  $m_0$ such that for any $m \geq m_0$, we have 
\begin{equation*}
|f(x_0) - \sum_{i=0}^{m} 
f(\eta_{i}(\Lambda_m)^{1/r_1}) H^{m}_{i,\Lambda_m}(x_0)| 
< \epsilon.
\end{equation*}
Evaluating $f$ in the last equation leads to 
\begin{equation}\label{epsilonequation}
\begin{split}
& \frac{1}{x_0} \sum_{\eta_{i}(\Lambda_m)^{1/r_1} \leq x_0}
(x_0 - \eta_{i}(\Lambda_m)^{1/r_1}) H^{m}_{i,\Lambda_m}(x_0) +  \\
&\frac{1}{1-x_0} \sum_{\eta_{i}(\Lambda_m)^{1/r_1} > x_0}
(\eta_{i}(\Lambda_m)^{1/r_1} - x_0) 
H^{m}_{i,\Lambda_m}(x_0) < \epsilon. \\
\end{split}
\end{equation}  
Now, if $|x_0 - \eta_i(\Lambda_m)^{1/r_1}| > \epsilon \;$ for $i=1,2,...,m$,
then, using the fact that $\sum_{i=0}^{m} H^{m}_{i,\Lambda_m}(x_0) = 1$, shows 
that the left hand side expression of (\ref{epsilonequation}) is also strictly
greater than $\epsilon$, leading to a contradiction. Therefore, for any 
$\epsilon >0$ and any $x_0 \in [0,1]$, there exists an $m_0$ such that for any 
$m \geq m_0$, there exists an $i \leq m$ such that 
$|x_0 - \eta_i(\Lambda_m)^{1/r_1}| < \epsilon$.    
\end{proof}
\noindent As this point, we are ready to prove Theorem \ref{muntzelevation}. 
\vskip 0.3 cm
\noindent {\bf{Proof of Theorem \ref{muntzelevation} :}}
Let us prove the theorem when the sequence 
$\Lambda_{\infty} = (0=r_0,r_1,...,r_n,....,r_l,...)$ of strictly 
increasing real numbers satisfy the conditions (\ref{twoconditions}).
In this case, if we denote by $P$ the Gelfond-B\'ezier curve 
with control points $(P_0,P_1,...,P_n)$, we have to show 
that given a point $x \in [0,1]$ and a sequence of real numbers 
$\eta_{i(x)}(\Lambda_m)^{1/r_1}$ that converges to $x$ as $m$ goes 
to infinity (this is possible thanks to the density proposition
\ref{densitytheorem}), the points $b^{m}_{i(x)}$ converges to 
$P(x)$ as $m$ goes to infinity. As the function $P$ is continuous, 
$P(\eta_{i(x)}(\Lambda_m)^{1/r_1})$ converges to $P(x)$.
Therefore, for any $\epsilon >0$, there exists an $M_0$, 
such that for any $m \geq M_0$, we have 
\begin{equation*}
|P(x) - P(\eta_{i(x)}(\Lambda_m)^{1/r_1})| < \epsilon.
\end{equation*}
Moreover, from Theorem \ref{intermediate}, there exists an $M_1$ such that for 
any $m \geq M_1$
\begin{equation*}
|b^{m}_{i(x)} - P(\eta_{i(x)}(\Lambda_m)^{1/r_1})| < \epsilon.
\end{equation*}
Therefore, $|P(x) -b^{m}_{i(x)}|< 2\epsilon$ for any $m \geq max(M_0,M_1)$,
thereby proving the pointwise convergence of the dimension elevated control polygons
to the Gelfond-B\'ezier curve. The convergence is uniform as we have 
\begin{equation*}
\max_{x} |P(x) - b^{m}_{i(x)}| \leq 
\max_x |P(x) - P(\eta_{i(x)}(\Lambda_m)^{1/r_1})| 
+ \max_{i(x)}  |P(\eta_{i(x)}(\Lambda_m)^{1/r_1})-b^{m}_{i(x)}|
\end{equation*}
The function $P$ is continuous in the compact interval $[0,1]$, thus 
$$
\max_x |P(x) - P(\eta_{i(x)}(\Lambda_m)^{1/r_1})| \rightarrow 0 
\quad \textnormal{as $m$ goes to infinity},
$$
and Theorem \ref{intermediate} shows that 
$$
\max_{i(x)}  |P(\eta_{i(x)}(\Lambda_m)^{1/r_1})-b^{m}_{i(x)}| 
\rightarrow 0 
\quad \textnormal{as $m$ goes to infinity}.
$$
This conclude the proof of the if part of the theorem.
To prove the only if part of the theorem, 
we proceed by contradiction. Let us assume that 
the real number $r_i$ satisfy 
\begin{equation*}
\sum_{i=1}^{\infty} \frac{1}{r_i} < \infty.
\end{equation*}
Without loss of generality, we can take the case in which 
the control points $(P_0,P_1,...,P_n)$ are real numbers such 
that $P_0 = 0$, $P_1 = 1$ and $P_1 < P_2 < ...< P_n$. 
In this case, for any $m$, we have $b^{m}_{0} = 0$ and $b^{m}_{1}$ 
is a strictly decreasing function of $m$ that converges to  
a strictly positive number $0<\delta <1$ 
\begin{equation*}
\delta = \lim_{m\to\infty} b^{m}_{1} = 
\prod_{i=2}^{\infty} \left( 1 - \frac{r_1}{r_{j}} \right).
\end{equation*}
In this case we would have a gap between the point zero and $\delta$, namely, 
if we take $x = \delta/2$ than for any $m \geq 0$, we have 
$$
|b^{m}_{i} - x| > \frac{1}{4} 
\quad \textnormal{for} \quad 
i = 1,...,m
$$
and therefore, the limiting control polygon does not converge 
pointwise to the Gelfond-B\'ezier curve.  
   
\section{Discussion}
In the following, we give a list of directions for future research as well as some 
open problems.
\vskip 0.3 cm 
{\bf{1- }}The corner cutting scheme (\ref{initial}) and (\ref{cornercutting}) 
can be generalized so as to describe the dimension elevation algorithm 
of rational Gelfond-B\'ezier curves. In this case, weighted corner-cutting
schemes are derived and the methods developed in this 
work can contribute to the study of the convergence of 
these new family of corner cutting schemes.
\vskip 0.3 cm 
{\bf{2- }}We can study the limiting polygon of the corner 
cutting scheme in case we relax the hypothesis of strictly increasing 
sequences $0 < r_1 <r_2 <...<r_m<...$ to only increasing sequences
$0 < r_1 \leq r_2 \leq ....\leq r_m \leq ...$. In this case, the 
Gelfond-B\'ezier curves involve logarithmic functions \cite{gelfond},
namely, if we rewrite the exponent $(r_1,r_2,...,r_n) = (\tilde{r}_1,
\tilde{r}_2,....,\tilde{r}_m)$ where the real number $\tilde{r}_i$ 
are distinct and if we denote by $m_j, j=1,...,m$ the number of indices 
$i = 0,...,n$ for which $r_i = \tilde{r}_i$, then the space 
$span(1,t^{r_1},t^{r_2},...,t^{r_n}) = span(1,x^{r_j}(\ln x)^{i}; j=0,1,..m; 
i= 0,1,...,m_j-1)$. The results of this work could be extended to this case 
by a limiting process.
\vskip 0.3 cm 
{\bf{3- }}The Gelfond-B\'ezier curves are too ``degenerate'' at 
the origin to study the dimension elevation algorithm  
in case we impose no condition of monotonicity on the 
real numbers $r_i$. For instance, if we consider the case $n=3,$ 
$r_1=1, r_2 = 2, r_3 = 3$ and $r_j = 1/j$, for $j > 3$ and we start 
with a control polygon $(P_0,P_1,P_2,P_3)$ then the dimension elevated 
control polygon to the order $m$ is not obtained by a corner cutting 
scheme similar to (\ref{initial}) and (\ref{cornercutting}) but 
instead the algorithm collapses the first $m-3$ control points
to $P_0$ while the remaining control points are given by $(P_1,P_2,P_3)$
\cite{ait-haddou1}.
However, if we consider the dimension elevation algorithm of 
Gelfond-B\'ezier curves far from the origin, i.e. over an interval
$[a,1]$ with $a >0$, then the Gelfond-Bernstein basis coincide 
with the Chebyshev-Bernstein basis \cite{ait-haddou1},
the degeneracy at the origin disappear and the algorithm leads 
to a family of corner cutting schemes without imposing any condition
of monotonicity on the real numbers $r_i$. Unfortunately, such family 
of corner cutting schemes involves rather complicated 
coefficients expressed in term of Schur functions \cite{ait-haddou2}. 
It will be interesting to find, for the far from the origin case, 
conditions on the real number $r_i$ for the convergence of the 
dimension elevation algorithm to the underlying curve.
In the theory of \muntz spaces over an 
interval $[a,1]$ with $a>0$, and in which we impose no condition
on the real numbers $r_i$ (beside that they are pairwise distinct), 
then the corresponding \muntz space is a dense subset of $C([a,1])$
if and only if the real numbers $r_i$ satisfy the so-called 
full \muntz condition \cite{borwein2} 
\begin{equation}\label{fullmuntz}
\sum_{r_k \neq 0} \frac{1}{|r_k|} = \infty.
\end{equation}
The question is then does the surprising emergence of the \muntz 
condition in Theorem \ref{muntzelevation} for the real numbers 
$r_i$ with the condition of the theorem, repeat itself for the full 
\muntz condition (\ref{fullmuntz}) for the far from the origin case.
\vskip 0.3 cm 
{\bf{4- }}It is not difficult to show that with the conditions of Theorem 
\ref{muntzelevation}, the conditions (\ref{twoconditions})
are sufficient for the uniform convergence of the dimension elevation algorithm 
of Chebyshev-B\'ezier curve of the associated \muntz space to the 
underlying curve (the proof will be published elsewhere).
However, the pointwise convergence is more involved and 
the question of whether the \muntz condition is necessary prove to be
interesting.  
\vskip 0.3 cm 
{\bf{5- }}It is probably a difficult problem to study the rate 
of convergence of the corner  cutting scheme (\ref{initial})
and (\ref{cornercutting}) in case the real numbers $r_i$ satisfy
the condition of Theorem \ref{muntzelevation}.
Adapting the method of Prautzsch and  Kobbelt \cite{kobbelt}
to this problem shows for example that if the numbers $r_i$ are integers and that  
there exists a constant $K$ such that $r_j \leq K +j$ for all 
$j \geq 1$ then the rate of convergence of the corner cutting 
scheme is in $O(\frac{1}{m})$. 

\vskip 0.3 cm 
{\bf{6- }}It may happen that studying the limiting polygon of the corner cutting 
scheme (1) and (2) is richer under the condition  
\begin{equation}\label{nomuntz}
\sum_{k=1}^{\infty} \frac{1}{r_k} < \infty,
\end{equation}     
in analogy with the problem of studying the uniform closure of the \muntz space 
$E_{\infty} = span(1,t^{r_1},...,t^{r_n},...)$ under the condition 
(\ref{nomuntz}).

\subsubsection*{Acknowledgment : This work was partially supported by the MEXT Global COE project.
Osaka University, Japan.} 

\vskip 0.4cm

\section{References}

\end{document}